\documentclass{amsart}

\usepackage{amssymb}
\usepackage{mathtools}
\usepackage{booktabs}
\usepackage{array}
\usepackage{enumitem}
\usepackage{graphicx}
\usepackage[hidelinks]{hyperref}
\graphicspath{{figures/}{outputs/figures/}}

\newtheorem{theorem}{Theorem}[section]
\newtheorem{lemma}[theorem]{Lemma}
\newtheorem{proposition}[theorem]{Proposition}
\newtheorem{corollary}[theorem]{Corollary}
\theoremstyle{definition}

\theoremstyle{remark}

\numberwithin{equation}{section}
\newcommand{\K}{\mathbb K}
\newcommand{\C}{\mathbb C}
\newcommand{\Q}{\mathbb Q}
\newcommand{\Z}{\mathbb Z}
\newcommand{\N}{\mathbb N}
\newcommand{\id}{\operatorname{id}}
\newcommand{\End}{\operatorname{End}}
\newcommand{\Hom}{\operatorname{Hom}}
\newcommand{\Tr}{\operatorname{Tr}}
\newcommand{\Span}{\operatorname{span}}
\newcommand{\wt}{\operatorname{wt}}

\newcommand{\ev}{\operatorname{ev}}
\newcommand{\coev}{\operatorname{coev}}
\newcommand{\qtr}{\operatorname{qtr}}
\newcommand{\coqtr}{\operatorname{coqtr}}
\newcommand{\htd}{\operatorname{ht}_{d}}
\newcommand{\undercurvearrowleft}{%
  \raisebox{.5em}{\rotatebox{180}{$\curvearrowright$}}}
\newcommand{\undercurvearrowright}{%
  \raisebox{.5em}{\rotatebox{180}{$\curvearrowleft$}}}

\begin{document}

\title[Two-Parameter Quantum Link Invariants II]
{The Knot Invariant Associated to Two-Parameter Quantum Algebras II}

\author{Fan Zhaobing}
\author{Zhu Tianhao}

\begin{abstract}
% P0.01
Fan, Ma, and Xing constructed oriented-tangle invariants from finite-type
two-parameter quantum algebras.
In this paper, we construct an explicit parameter-transport comparison between
two-parameter modules and their corresponding one-parameter modules, and we
verify this comparison on every elementary oriented-tangle operator.
After extending scalars to a common coefficient field, we prove that if $M_1$
is any finite-dimensional integrable type-$1$ simple highest-weight
$U_{v,1}$-module whose weights lie in an admissible lattice and
$M_t=\Phi_t(M_1)$ is its transported module, then for every oriented link
$\mathcal L$, the corresponding normalized invariants satisfy
\[
 I_{v,t}^{M_t}(\mathcal L)=I_{v,1}^{M_1}(\mathcal L).
\]

% P0.02
Consequently, the two invariants assign equal values to exactly the same
pairs of oriented links and therefore have the same distinguishing power;
this includes the vector representations of the finite classical types
$A$, $B$, $C$, and $D$.
Sean Clark's comparison of ordinary and super quantum knot invariants is
obtained as a specialization of the same transport principle in which
explicit scalar factors are allowed.
\end{abstract}

\subjclass[2020]{Primary 57K16; Secondary 17B37, 16T05, 18M15}
\keywords{two-parameter quantum algebra, link invariant, quantum covering group,
quantum trace}

\maketitle
\enlargethispage{3pt}
\setcounter{tocdepth}{2}
\tableofcontents

%==============================================================================
\section{Introduction}
%==============================================================================

% P1.01
Artin formalized the braid group, whose elements are isotopy classes of
finitely many strands running monotonically between two parallel planes
\cite{artin1925}.
The closure of a braid is obtained by joining corresponding upper and lower
endpoints outside the braid.
Alexander proved that every oriented link is such a closure
\cite{alexander1923}.
Markov identified the two elementary moves that preserve the isotopy class
of a braid closure \cite{markov1935}.
Jones later obtained his polynomial invariant from representations of braid
groups arising from operator algebras \cite{jones1985}.

% P1.02
Drinfeld and Jimbo introduced one-parameter quantum groups, which are
deformations, depending on one variable, of universal enveloping algebras of
semisimple Lie algebras \cite{drinfeld1987,jimbo1985}.
A universal $R$-matrix is an invertible tensor that interchanges the two
orders of the coproduct and satisfies the Yang--Baxter equation; on tensor
products of modules it therefore defines braid operators
\cite{kassel1995,lusztig1993}.
Here the coproduct is the algebra map that determines how an algebra element
acts on a tensor product of two modules.
The Yang--Baxter equation says that the two three-step ways of moving three
adjacent strands past one another give the same operator.
Reshetikhin and Turaev combined these operators with duality maps, which
algebraically bend an oriented strand, and twist maps, which record a chosen
transverse direction along it, to obtain invariants of colored framed tangles
\cite{reshetikhin-turaev1990,turaev1994}.
In this historical statement, colored means that a representation is assigned
to each component, while a framing is a continuously chosen transverse
direction along the component.
Turaev also gave an operator presentation of ordinary oriented-tangle
invariants in terms of crossings and duality morphisms \cite{turaev1990}.

% P1.03
Two-parameter and multiparameter quantum groups developed alongside the
one-parameter theory.
Takeuchi gave an early two-parameter quantization of $\mathrm{GL}(n)$
\cite{takeuchi1990}.
Benkart and Witherspoon studied two-parameter Drinfeld doubles
\cite{benkart-witherspoon2004}, as well as their representations and
Schur--Weyl duality \cite{benkart-witherspoon-rep2004}.
Fan and Li constructed $U_{v,t}$ from Cartan data and a nonsymmetric Ringel
form \cite{fan-li2015}.
Here Cartan data consist of simple-root indices and a symmetric bilinear form
encoding a generalized Cartan matrix, while a Ringel form is a bilinear form
whose symmetrization is that Cartan form.
Here $v$ and $t$ are independent formal parameters, and setting $t=1$ gives
the one-parameter specialization used in this paper.
Fan and Xing developed the corresponding deformed-double theory
\cite{fan-xing2019}.
Fan, Ma, and Xing then constructed its quasi-$R$-matrix, the part of the
braiding that changes weights, its checked $R$-matrix, which also includes
the flip of tensor factors, and the associated oriented-tangle functor
\cite{fan-ma-xing2024}.

% P1.04
For a positive integer $n$, $\mathfrak{osp}(1|2n)$ denotes the
orthosymplectic Lie superalgebra, and $\mathfrak{so}(2n+1)$ denotes the Lie
algebra of the odd-dimensional special orthogonal group.
Sean Clark constructed colored knot invariants from quantum covering groups
associated with $\mathfrak{osp}(1|2n)$ and compared them with invariants
associated with $\mathfrak{so}(2n+1)$ \cite{clark2017}.
A Lie superalgebra is a vector space graded by the two parity classes
$\Z/2\Z=\{\overline0,\overline1\}$, called even and odd, equipped with a
bracket whose skew-symmetry and Jacobi identity include the signs determined
by that grading.
A quantum covering group contains an auxiliary parameter whose two
specializations encode the ordinary and super cases \cite{clark2017}.
In addition, we prove that Sean Clark's comparison of ordinary and super
quantum knot invariants is a quantum-covering specialization of the general
scalar-defect transport theorem.

% P1.05
Our main result concerns finite Cartan type, meaning that the associated
Dynkin diagram, the graph that records the interactions among the simple
roots, is of finite type, and every finite-dimensional integrable type-$1$
simple highest-weight module $M_1$ whose weights lie in an admissible lattice,
together with its transported module $M_t=\Phi_t(M_1)$.
An admissible lattice, defined precisely in \eqref{eq:lattice}, is a lattice
containing the root lattice with finite index.
The symbol $\Phi_t$ denotes the explicit change of module action defined in
Section~\ref{sec:transport}; it does not denote an algebra isomorphism.
Here the index \(i\) ranges over the simple roots; \(K_i,K_i'\) are the
commuting invertible generators called toral generators, and \(E_i,F_i\) are
the corresponding raising and lowering generators.
A weight vector is a common eigenvector for all \(K_i,K_i'\).
The term type-$1$ means that these eigenvalues are exactly those in
\eqref{eq:weight-K}--\eqref{eq:weight-Kprime}; integrable means that, for
every vector, sufficiently high powers of each \(E_i\) and \(F_i\) annihilate
that vector; and simple highest-weight means that the module has no nonzero
proper submodule and is generated by a nonzero vector annihilated by every
\(E_i\).
Our main result states that
\begin{equation}\label{eq:intro-main}
 \boxed{I_{v,t}^{M_t}(\mathcal L)=I_{v,1}^{M_1}(\mathcal L)}
\end{equation}
for every oriented link $\mathcal L$.
Here $I_{v,t}^{M_t}(\mathcal L)$ is the scalar assigned to
$\mathcal L$ by the normalized two-parameter tangle functor colored by
$M_t$, and $I_{v,1}^{M_1}(\mathcal L)$ is the corresponding scalar after
the specialization $t=1$.
Consequently, for any two oriented links $\mathcal L_1,\mathcal L_2$,
\[
 I_{v,t}^{M_t}(\mathcal L_1)=I_{v,t}^{M_t}(\mathcal L_2)
 \quad\Longleftrightarrow\quad
 I_{v,1}^{M_1}(\mathcal L_1)=I_{v,1}^{M_1}(\mathcal L_2).
\]
This equivalence is the precise meaning of saying that the two invariants
have the same distinguishing power, or the same fineness.
The result applies uniformly to the classical vector representations of
types $A$, $B$, $C$, and $D$; affine Cartan data are not considered here.
The letters $A$, $B$, $C$, and $D$ label the four classical families of
finite Dynkin diagrams.

% P1.06
The proof has three steps.
First, we define the module transport $\Phi_t$ and an invertible
weight-diagonal map $J$ that compares the tensor product of transported
modules with the transport of their tensor product; such a comparison map is
called a tensorator.
Second, we prove that $J$ conjugates the checked $R$-matrices and is
compatible with all four evaluation and coevaluation maps that represent
oriented cups and caps.
Third, we present a link as a braid closure and show that the two boundary
copies of $J$ cancel inside the resulting closure trace, namely the ordinary
matrix trace with the weight-diagonal closure operator defined in
\eqref{eq:closure-trace}.
This proves equality of the complete tangle evaluations, rather than merely
equality of selected entries of a crossing matrix.

% P1.07
The paper has four sections.
Section~2 recalls the finite-type algebra, its weight modules, the
skew-Hopf pairing, the Fan--Ma--Xing quasi-$R$-matrix, and the ordinary
oriented-tangle construction of the link invariant.
Section~3 first defines parameter transport as a specific transfer of algebraic
structure and then proves the equality and records its consequences, including
Sean Clark's theorem as a specialization of the general transport statement.
Section~4 states the conclusions and the exact scope of the result.

%==============================================================================
\section{Preliminaries on Finite-Type Two-Parameter Quantum Algebras and Link Invariants}
\label{sec:preliminaries}
%==============================================================================

\subsection{Cartan data, lattices, and coefficient fields}

% P2.01
Throughout the paper, $\Z$ is the ring of integers, $\Q$ is the field of
rational numbers, and $\N=\{0,1,2,\ldots\}$.
The notation $\Z_{\leq0}$ denotes the set of nonpositive integers.
Let $I$ be a nonempty finite index set.
Let $\Omega=(\Omega_{ij})_{i,j\in I}$ be an integral matrix such that
\begin{equation}\label{eq:omega}
 \Omega_{ii}>0,\qquad \Omega_{ij}\leq0\ (i\ne j),\qquad
 \frac{\Omega_{ij}+\Omega_{ji}}{\Omega_{ii}}\in\Z_{\leq0}
 \ (i\ne j),
 \qquad \gcd_{i\in I}\Omega_{ii}=1.
\end{equation}
Here integral means that every matrix entry is an integer, and $\gcd$ denotes
the greatest common divisor.
Write $Q=\Z[I]$ for the free abelian group with basis $I$; it is called the
root lattice, and its basis element $i$ is also written $\alpha_i$ and called
the $i$th simple root.
A free abelian group with basis $I$ consists of the finite integer linear
combinations of elements of $I$, with unique coefficients.
The symbol $Q$ for this lattice must not be confused with the blackboard-bold
symbol $\Q$ for the rational-number field.
Define two bilinear forms on $Q$ by
\begin{equation}\label{eq:forms}
 \langle i,j\rangle=\Omega_{ij},
 \qquad i\mathbin{\cdot}j=\Omega_{ij}+\Omega_{ji}.
\end{equation}
The first form is the Ringel form, and the second is its symmetric part.
A bilinear form is additive in each argument, and symmetric means that its
value is unchanged when the two arguments are interchanged.
Define the positive integers $d_i$ and the Cartan integers $a_{ij}$ by
\begin{equation}\label{eq:cartan}
 d_i=\frac{i\mathbin{\cdot}i}{2}=\Omega_{ii},
 \qquad
 a_{ij}=\frac{i\mathbin{\cdot}j}{d_i}
       =\frac{2i\mathbin{\cdot}j}{i\mathbin{\cdot}i}.
\end{equation}
The matrix $A=(a_{ij})_{i,j\in I}$ is the symmetrizable generalized Cartan
matrix determined by $\Omega$; symmetrizable means that
$d_i a_{ij}=d_j a_{ji}$.
A generalized Cartan matrix is an integral matrix with diagonal entries
$a_{ii}=2$, off-diagonal entries $a_{ij}\leq0$, and
$a_{ij}=0$ exactly when $a_{ji}=0$; these properties follow here from
\eqref{eq:omega}.
We assume that $A$ is of finite type, equivalently that its Dynkin diagram
is a finite-type Dynkin diagram or that the symmetric form in
\eqref{eq:forms} is positive definite on $Q\otimes_{\Z}\Q$.
Positive definite means that $\lambda\mathbin{\cdot}\lambda>0$ for every
nonzero $\lambda$ in this rational vector space.
The pair $(I,\mathbin{\cdot})$ is then a finite Cartan datum, and the
antisymmetric part of the Ringel form is the source of the parameter $t$.

% P2.02
An admissible lattice is a free abelian group $L$ satisfying
\begin{equation}\label{eq:lattice}
 Q\subseteq L\subseteq Q\otimes_{\Z}\Q,
 \qquad [L:Q]<\infty.
\end{equation}
Here $[L:Q]$ is the number of cosets of $Q$ in $L$.
The tensor product $Q\otimes_{\Z}\Q$ is the rational vector space spanned by
the simple roots, so every $\lambda\in L$ has unique rational coordinates
$\lambda=\sum_{i\in I}\lambda_i i$.
Both bilinear forms in \eqref{eq:forms} extend uniquely to $L$ by
$\Q$-bilinearity.
Define the weighted height of $\lambda\in L$ by
\begin{equation}\label{eq:weighted-height}
 \htd(\lambda)=\sum_{i\in I}d_i\lambda_i.
\end{equation}
Choose a positive integer $D$ such that
\begin{equation}\label{eq:denominators}
 \lambda\mathbin{\cdot}\mu,\quad
 \langle\lambda,\mu\rangle,\quad
 \htd(\lambda)
 \in D^{-1}\Z
 \qquad(\lambda,\mu\in L),
\end{equation}
where $D^{-1}\Z=\{a/D:a\in\Z\}$.
Fix algebraically independent variables $v$ and $t$, meaning that no
nonzero polynomial with rational coefficients vanishes at the pair
$(v,t)$.
We work over
\begin{equation}\label{eq:field}
 \K_D=\Q(v^{1/D},t^{1/D}).
\end{equation}
Thus $\K_D$ is the field of rational functions in the displayed fractional
powers, and $v^r$ and $t^r$ are literal elements of $\K_D$ for every
$r\in D^{-1}\Z$.
We also write $v_\lambda=v^{\htd(\lambda)}$.

\subsection{The algebra \texorpdfstring{$U_{v,t}$}{U(v,t)}}

% P2.03
For the variables $v$ and $t$ fixed above, put
\begin{equation}\label{eq:local-parameters}
 v_i=v^{d_i},\qquad t_i=t^{d_i},
\end{equation}
and, for every $p\in\N$, define the two-parameter quantum integer and
factorial by
\begin{equation}\label{eq:q-integers}
 [p]_{v_i,t_i}
 =\frac{(v_it_i)^p-(v_it_i^{-1})^{-p}}
        {v_it_i-(v_it_i^{-1})^{-1}},
 \qquad
 [p]_{v_i,t_i}!=\prod_{r=1}^{p}[r]_{v_i,t_i}.
\end{equation}
The empty product is $[0]_{v_i,t_i}!=1$.
The one-parameter symbols $[p]_{v_i}$ and $[p]_{v_i}!$ are obtained by
setting $t=1$.
Direct simplification gives
\begin{equation}\label{eq:factorial}
 [p]_{v_i,t_i}=t_i^{p-1}[p]_{v_i},
 \qquad
 [p]_{v_i,t_i}!=t_i^{p(p-1)/2}[p]_{v_i}!.
\end{equation}

% P2.04
The two-parameter quantum algebra $U_{v,t}=U_{v,t}(I,\cdot,\Omega)$ is the
$\K_D$-algebra generated by the elements below and subject to the displayed
relations \cite[Section~2]{fan-ma-xing2024}:
\begin{equation}\label{eq:generators}
 E_i,F_i,K_i^{\pm1},(K_i')^{\pm1}\qquad(i\in I),
\end{equation}
with commuting invertible toral generators and relations
\begin{align}
 K_iE_jK_i^{-1}
 &=v^{i\cdot j}t^{\langle j,i\rangle-\langle i,j\rangle}E_j,
 &K_i'E_j(K_i')^{-1}
 &=v^{-i\cdot j}t^{\langle j,i\rangle-\langle i,j\rangle}E_j,
 \label{eq:torus-E}\\
 K_iF_jK_i^{-1}
 &=v^{-i\cdot j}t^{\langle i,j\rangle-\langle j,i\rangle}F_j,
 &K_i'F_j(K_i')^{-1}
 &=v^{i\cdot j}t^{\langle i,j\rangle-\langle j,i\rangle}F_j.
 \label{eq:torus-F}
\end{align}
The mixed relation is
\begin{equation}\label{eq:commutator}
 E_iF_j-F_jE_i
 =\delta_{ij}\frac{K_i-K_i'}{v_i-v_i^{-1}}.
\end{equation}
Here $\delta_{ij}$ is $1$ for $i=j$ and $0$ otherwise.

% P2.05
For $i\ne j$, define the positive integer $N_{ij}=1-a_{ij}$ and the two
Ringel-form entries $A_{ij}$ and $B_{ij}$ by
\begin{equation}\label{eq:NAB}
 N_{ij}=1-\frac{i\mathbin{\cdot}j}{d_i},
 \qquad A_{ij}=\langle i,j\rangle,
 \qquad B_{ij}=\langle j,i\rangle.
\end{equation}
The divided powers used below are
\begin{equation}\label{eq:divided}
 E_i^{(p)}=\frac{E_i^p}{[p]_{v_i,t_i}!},
 \qquad
 F_i^{(p)}=\frac{F_i^p}{[p]_{v_i,t_i}!}.
\end{equation}
For $p,p'\in\N$ with $p+p'=N_{ij}$, put
\begin{equation}\label{eq:kappa}
 \kappa_{ij}(p,p')=(-1)^p
 t^{-d_i pp'+pA_{ij}-pB_{ij}},
 \qquad p+p'=N_{ij},
\end{equation}
the Serre relations are
\begin{align}
 \sum_{p+p'=N_{ij}}\kappa_{ij}(p,p')E_i^{(p)}E_jE_i^{(p')}&=0,
 \label{eq:serre-E}\\
 \sum_{p+p'=N_{ij}}\kappa_{ij}(p,p')F_i^{(p')}F_jF_i^{(p)}&=0.
 \label{eq:serre-F}
\end{align}
The name Serre relation refers to these higher-order relations between the
root generators attached to two distinct simple roots.

% P2.06
A Hopf algebra is an algebra equipped with a coproduct $\Delta$, a counit
$\varepsilon$, and an antipode $S$.
The coproduct and counit are algebra homomorphisms, the coproduct is
coassociative, and the defining identities are
\begin{equation}\label{eq:Hopf-axioms}
 \begin{gathered}
 (\Delta\otimes\id)\Delta=(\id\otimes\Delta)\Delta,\qquad
 (\varepsilon\otimes\id)\Delta=\id=(\id\otimes\varepsilon)\Delta,\\
 m(S\otimes\id)\Delta=\eta\varepsilon
 =m(\id\otimes S)\Delta.
 \end{gathered}
\end{equation}
Here $m$ is multiplication, $\eta$ sends a scalar to that scalar times the
identity element, and $\id$ denotes the identity map; the usual identifications
of a vector space with its tensor product with the coefficient field are
understood in the counit identities.
For $U_{v,t}$, these three structure maps are determined on generators by
\begin{align}
 \Delta(K_i)&=K_i\otimes K_i,
 &\Delta(K_i')&=K_i'\otimes K_i',
 \label{eq:coproduct-torus}\\
 \Delta(E_i)&=E_i\otimes1+K_i\otimes E_i,
 &\Delta(F_i)&=1\otimes F_i+F_i\otimes K_i',
 \label{eq:coproduct-roots}\\
 S(E_i)&=-K_i^{-1}E_i,
 &S(F_i)&=-F_i(K_i')^{-1}.
 \label{eq:antipode-roots}
\end{align}
On the toral generators and for the counit,
\begin{equation}\label{eq:antipode-torus}
 S(K_i)=K_i^{-1},\quad S(K_i')=(K_i')^{-1},\qquad
 \varepsilon(K_i)=\varepsilon(K_i')=1,\quad
 \varepsilon(E_i)=\varepsilon(F_i)=0.
\end{equation}
The notation $U_{v,1}$ means that the coefficients in the defining
relations are evaluated at $t=1$, followed by extension of scalars from
$\Q(v^{1/D})$ to $\K_D$.

\subsection{Weight modules and dual modules}

% P2.08
An $L$-weight module is a finite-dimensional module
\begin{equation}\label{eq:weight-decomp}
 M=\bigoplus_{\lambda\in L}M_\lambda
\end{equation}
with finitely many nonzero summands, such that
\begin{equation}\label{eq:weight-shifts}
 E_iM_\lambda\subseteq M_{\lambda+i},
 \qquad F_iM_\lambda\subseteq M_{\lambda-i},
\end{equation}
and, for $m_\lambda\in M_\lambda$,
\begin{align}
 K_im_\lambda
 &=v^{i\cdot\lambda}
 t^{\langle\lambda,i\rangle-\langle i,\lambda\rangle}m_\lambda,
 \label{eq:weight-K}\\
 K_i'm_\lambda
 &=v^{-i\cdot\lambda}
 t^{\langle\lambda,i\rangle-\langle i,\lambda\rangle}m_\lambda.
 \label{eq:weight-Kprime}
\end{align}
Here finite-dimensional means that $M$ has a finite basis, each
$M_\lambda$ is a subspace called a weight space, and the direct-sum symbol
means that every vector of $M$ has a unique expression as a finite sum of
vectors from distinct weight spaces.
A vector is called homogeneous, or a weight vector, when it belongs to one
weight space $M_\lambda$.
The weight of a nonzero homogeneous vector $m_\lambda$ is written
$\wt(m_\lambda)=\lambda$.
The module is type-$1$ when the toral actions are exactly
\eqref{eq:weight-K}--\eqref{eq:weight-Kprime}.
It is integrable when every $E_i$ and $F_i$ acts locally nilpotently.
Explicitly, local nilpotence means that for every vector $m$ and every
$i\in I$, some positive power of $E_i$ and some positive power of $F_i$
annihilate $m$.
It is a simple highest-weight module of highest weight $\Lambda$ when it is
generated by a nonzero vector $m_\Lambda\in M_\Lambda$ satisfying
$E_i m_\Lambda=0$ for every $i$ and it has no nonzero proper submodule.

% P2.09
For later use, recall that the algebraic dual of a finite-dimensional module
$M$ is the vector space $M^*=\Hom_{\K_D}(M,\K_D)$ with action
\begin{equation}\label{eq:dual-action}
 (u\varphi)(m)=\varphi(S(u)m)
 \qquad(u\in U_{v,t},\ \varphi\in M^*,\ m\in M).
\end{equation}
The antipode in this formula is the map in
\eqref{eq:antipode-roots}--\eqref{eq:antipode-torus}; it is precisely what
makes evaluation a module homomorphism.
The notation $\Hom_{\K_D}(M,\K_D)$ means the vector space of all
$\K_D$-linear maps from $M$ to $\K_D$.
Its weight-$(-\lambda)$ subspace is
\begin{equation}\label{eq:dual-weight-space}
 (M^*)_{-\lambda}
 =\{\varphi\in M^*: \varphi(M_\mu)=0\text{ whenever }\mu\ne\lambda\}.
\end{equation}
Thus a homogeneous functional that is nonzero only on the weight space
$M_\lambda$ has weight $-\lambda$.

\subsection{The skew-Hopf pairing and the
\texorpdfstring{quasi-$R$-matrix}{quasi-R-matrix}}

% P2.10
Let $U_{v,t}^{+}$ be the subalgebra generated by all $E_i$, and let
$U_{v,t}^{-}$ be the subalgebra generated by all $F_i$.
The positive Borel half $U_{v,t}^{\geq0}$ is generated by
$E_i,K_i^{\pm1}$, and the negative Borel half $U_{v,t}^{\leq0}$ is
generated by $F_i,(K_i')^{\pm1}$.
Thus each Borel half consists of the toral generators together with the root
generators of one sign.
Give the root generators the degrees $\deg(E_i)=i$ and $\deg(F_i)=-i$.
Then
\[
 U_{v,t}^{+}=\bigoplus_{\nu\in Q^+}U_{v,t,\nu}^{+},
 \qquad
 U_{v,t}^{-}=\bigoplus_{\nu\in Q^+}U_{v,t,-\nu}^{-},
 \qquad
 Q^+=\N[I]=\sum_{i\in I}\N i.
\]
Thus $Q^+$ is the additive monoid of nonnegative integral combinations of
simple roots, and the two displayed subscripts record root degree.

% P2.11
For $\alpha=\sum_i a_i i$ and $\beta=\sum_i b_i i$ in $Q$, put
$K_\alpha=\prod_iK_i^{a_i}$ and
$K_\beta'=\prod_i(K_i')^{b_i}$, and define
\begin{equation}\label{eq:toral-pairing-character}
 \{\alpha,\beta\}
 =v^{\alpha\cdot\beta}
  t^{\langle\beta,\alpha\rangle-\langle\alpha,\beta\rangle}.
\end{equation}
The braces in this formula denote a scalar in $\K_D$ and do not denote a
set.
A \emph{skew-Hopf pairing} in the convention used here is a bilinear map
\[
 (\ ,\ )_\phi:U_{v,t}^{\geq0}\times U_{v,t}^{\leq0}\longrightarrow\K_D
\]
such that, for all elements for which the expressions are defined,
\begin{align}
 (1,y)_\phi&=\varepsilon(y),&
 (x,1)_\phi&=\varepsilon(x),
 \label{eq:skew-Hopf-units}\\
 (xx',y)_\phi&=(x\otimes x',\Delta^{\mathrm{op}}(y))_\phi,&
 (x,yy')_\phi&=(\Delta(x),y\otimes y')_\phi.
 \label{eq:skew-Hopf-products}
\end{align}
Here $\Delta^{\mathrm{op}}=P\circ\Delta$ is the opposite coproduct,
$P(a\otimes b)=b\otimes a$, and the pairing of simple tensors is defined by
$(x_1\otimes x_2,y_1\otimes y_2)_\phi
=(x_1,y_1)_\phi(x_2,y_2)_\phi$.
The adjective ``skew'' refers precisely to the opposite coproduct in the
first multiplication identity.
Fan and Xing prove that there is a unique such pairing with generator values
\begin{align}
 (K_\alpha,K_\beta')_\phi&=\{\alpha,\beta\},&
 (E_i,F_j)_\phi&=\frac{\delta_{ij}}{v_i^{-1}-v_i},
 \label{eq:skew-Hopf-generators}\\
 (K_\alpha,F_j)_\phi&=0,&
 (E_i,K_\beta')_\phi&=0.
 \label{eq:skew-Hopf-mixed-zero}
\end{align}
This is the skew-Hopf pairing used by Fan, Ma, and Xing
\cite[Proposition~4]{fan-xing2019}.

% P2.12
For every $\nu\in Q^+$, the restriction
\begin{equation}\label{eq:homogeneous-nondegeneracy}
 (\ ,\ )_\phi:
 U_{v,t,\nu}^{+}\times U_{v,t,-\nu}^{-}\longrightarrow\K_D
\end{equation}
is nondegenerate, meaning that an element in either factor is zero whenever
it pairs to zero with every element in the other factor
\cite{fan-ma-xing2024}.
Choose a basis $\mathcal B_\nu$ of $U_{v,t,-\nu}^{-}$.
Nondegeneracy gives a unique paired-dual basis
$\{b^\sharp:b\in\mathcal B_\nu\}$ of $U_{v,t,\nu}^{+}$ satisfying
\begin{equation}\label{eq:paired-dual-basis}
 (b^\sharp,c)_\phi=\delta_{b,c}
 \qquad(b,c\in\mathcal B_\nu),
\end{equation}
where $\delta_{b,c}$ is $1$ when $b=c$ and $0$ otherwise.
The symbol $\sharp$ is used here to avoid confusing this paired dual with
the ordinary vector-space dual denoted later by a star.
Define
\begin{equation}\label{eq:theta}
 \Theta_{t,\nu}=\sum_{b\in\mathcal B_\nu}b\otimes b^\sharp
 \in U_{v,t,-\nu}^{-}\otimes U_{v,t,\nu}^{+},
 \qquad
 \Theta_t=\sum_{\nu\in Q^+}\Theta_{t,\nu}.
\end{equation}
The tensor $\Theta_{t,\nu}$ is independent of the chosen basis, because it
is the canonical tensor of the nondegenerate pairing.

% P2.13
The sum $\Theta_t$ belongs to the root-degree completion
\begin{equation}\label{eq:root-degree-completion}
 \widehat{U_{v,t}^{-}\otimes U_{v,t}^{+}}
 =\prod_{\nu\in Q^+}
   U_{v,t,-\nu}^{-}\otimes U_{v,t,\nu}^{+}.
\end{equation}
The product sign means that one component of every root degree may be
nonzero, whereas the ordinary direct sum would permit only finitely many
nonzero components.
On a finite-dimensional weight module, $\Theta_{t,\nu}$ lowers the weight in
the first tensor factor by $\nu$ and raises the weight in the second factor
by $\nu$.
Only finitely many $\nu$ can act nontrivially on a fixed input tensor, so
the completed sum defines an ordinary linear operator on every tensor
product used below.
Fan, Ma, and Xing call $\Theta_t$ the quasi-$R$-matrix
\cite{fan-ma-xing2024}.
The prefix ``quasi'' indicates that the toral diagonal factor and the flip
of tensor factors have not yet been included.

% P2.14
The homogeneous components satisfy
\begin{align}
 (E_i\otimes1)\Theta_{t,\nu}+(K_i\otimes E_i)\Theta_{t,\nu-i}
 &=\Theta_{t,\nu}(E_i\otimes1)
 +\Theta_{t,\nu-i}(K_i'\otimes E_i),
 \label{eq:theta-E}\\
 (1\otimes F_i)\Theta_{t,\nu}+(F_i\otimes K_i')\Theta_{t,\nu-i}
 &=\Theta_{t,\nu}(1\otimes F_i)
 +\Theta_{t,\nu-i}(F_i\otimes K_i).
 \label{eq:theta-F}
\end{align}
Here $\Theta_{t,0}=1\otimes1$ and $\Theta_{t,\eta}=0$ when
$\eta\notin Q^+$.
These identities follow from the skew-Hopf axioms and the paired-dual basis
construction \cite{fan-ma-xing2024}.
They are the relations used in Section~\ref{sec:transport} to compare
$\Theta_t$ with its specialization at $t=1$.

\subsection{Review of the Fan--Ma--Xing link invariant}

% P2.15
An oriented tangle of type $(k,l)$ is a finite disjoint union of oriented
arcs and circles embedded in the slab $\mathbb R^2\times[0,1]$, with $k$
prescribed endpoints on the bottom plane and $l$ prescribed endpoints on
the top plane, considered up to ambient isotopy that fixes the boundary
\cite[Section~X.5]{kassel1995}.
Here $\mathbb R$ is the field of real numbers, an arc is an embedded copy of
an interval, and ambient isotopy means a continuous deformation of the whole
slab that leaves its boundary fixed.
The bottom and top orientations determine words in the signs $+$ and $-$:
the sign records whether the local orientation agrees with the upward
direction or points in the opposite direction.
The category $\mathsf{OTa}$ has these finite sign words as objects and
isotopy classes of oriented tangles with the indicated bottom and top words
as morphisms.
A category consists of objects and composable arrows called morphisms,
together with identity morphisms and an associative composition law.
Vertical stacking is composition, horizontal juxtaposition is tensor
product, and the empty word $\varnothing$ is the tensor unit.
A monoidal category is called \emph{strict} when repeated tensor products
may be written without associativity or unit isomorphisms; with the preceding
operations, $(\mathsf{OTa},\otimes,\varnothing)$ is strict
\cite[Proposition~XII.2.1]{kassel1995}.
Apart from identity strands, it is generated by six types of morphisms:
the positive crossing $X_+$, the negative crossing $X_-$, and the four
oriented cups and caps \cite[Theorem~3.2]{turaev1990}.
There is no independent full-twist generator in this ordinary oriented
tangle category; the Reidemeister-I curl is one of the relations that the
six assigned operators must satisfy.
A Reidemeister-I curl is the local move that adds or removes one small kink
in a strand without changing the remainder of the diagram.

% P2.16
Fix a finite-dimensional $L$-weight $U_{v,t}$-module $M$ and put
$M(+)=M$ and $M(-)=M^*$.
For a word $\boldsymbol\epsilon=(\epsilon_1,\ldots,\epsilon_r)$, define
\begin{equation}\label{eq:signed-word-module}
 M(\boldsymbol\epsilon)
 =M(\epsilon_1)\otimes\cdots\otimes M(\epsilon_r),
 \qquad M(\varnothing)=\K_D.
\end{equation}
The target category has the spaces $M(\boldsymbol\epsilon)$ as objects and
$U_{v,t}$-module homomorphisms as morphisms; the tensor product action uses
the coproduct in \eqref{eq:coproduct-torus}--\eqref{eq:coproduct-roots}.
A $U_{v,t}$-module homomorphism $f:P\to Q$ is a $\K_D$-linear map satisfying
$f(u p)=u f(p)$ for every $u\in U_{v,t}$ and $p\in P$.
To \emph{color} a tangle means to assign a representation to each connected
component and to use its dual when the local orientation is reversed.
A knot has exactly one connected component, whereas a link may have more
than one.
The signs $+$ and $-$ on local strands therefore indicate $M$ and $M^*$;
they do not indicate two different colors.
The Fan--Ma--Xing functor reviewed here fixes one module $M$, so every link
component receives the same color $M$ \cite{fan-ma-xing2024}.

% P2.17
For weights $\lambda,\mu\in L$, define the toral bicharacter
\begin{equation}\label{eq:toral-bicharacter}
 f_t(\lambda,\mu)
 =v^{-\lambda\cdot\mu}
  t^{\langle\lambda,\mu\rangle-\langle\mu,\lambda\rangle}.
\end{equation}
The word bicharacter means that $f_t$ is multiplicative in each argument
with respect to addition in $L$.
Let $P_{M,N}:M\otimes N\to N\otimes M$ be the flip
$P_{M,N}(m\otimes n)=n\otimes m$.
On $N\otimes M$, let $\widetilde f_t^{N,M}$ be the diagonal map
\begin{equation}\label{eq:toral-operator}
 \widetilde f_t^{N,M}(n_\mu\otimes m_\lambda)
 =f_t(\mu,\lambda)n_\mu\otimes m_\lambda.
\end{equation}
Let $\Theta_t^{N,M}$ denote the action of the completed sum
\eqref{eq:theta} on $N\otimes M$.
The operator without a flip and the checked operator are, respectively,
\begin{align}
 R_t^{N,M}
 &=\Theta_t^{N,M}\widetilde f_t^{N,M}:
 N\otimes M\longrightarrow N\otimes M,
 \label{eq:unchecked-R-definition}\\
 \widehat R_t^{M,N}
 &=R_t^{N,M}P_{M,N}
 =\Theta_t^{N,M}\widetilde f_t^{N,M}P_{M,N}:
 M\otimes N\longrightarrow N\otimes M.
 \label{eq:checked-R-definition}
\end{align}
Thus $R_t^{N,M}$ does not exchange the two tensor factors, while
$\widehat R_t^{M,N}$ does exchange them.
The hat is used in this paper to record the included flip; the map denoted
$\mathcal R$ by Fan, Ma, and Xing is our $\widehat R$
\cite{fan-ma-xing2024}.
Both maps are invertible, and their inverses have the explicitly ordered
forms
\begin{align}
 (R_t^{N,M})^{-1}
 &=(\widetilde f_t^{N,M})^{-1}(\Theta_t^{N,M})^{-1}:
 N\otimes M\longrightarrow N\otimes M,
 \label{eq:unchecked-R-inverse}\\
 (\widehat R_t^{M,N})^{-1}
 &=P_{N,M}(\widetilde f_t^{N,M})^{-1}(\Theta_t^{N,M})^{-1}:
 N\otimes M\longrightarrow M\otimes N.
 \label{eq:checked-R-inverse}
\end{align}
The order in these formulas is forced by
$(ABC)^{-1}=C^{-1}B^{-1}A^{-1}$.
Fan, Ma, and Xing prove that $\widehat R_t^{M,N}$ is a
$U_{v,t}$-module isomorphism and that these operators satisfy the braid
relation \cite{fan-ma-xing2024}.

% P2.18
Let $\mathcal E$ be a homogeneous basis of $M$ and let
$\mathcal E^*=\{b^*:b\in\mathcal E\}$ be its ordinary dual basis.
The star in $M^*$ and in $b^*$ denotes ordinary vector-space duality and is
unrelated to the paired-dual symbol $b^\sharp$ in \eqref{eq:theta}.
If $b\in\mathcal E$ has weight $\lambda$, then $b^*$ has weight
$-\lambda$ under the dual action defined in Subsection~2.3.
The four maps assigned to the four oriented caps and cups are
\begin{align}
 \ev &:M^*\otimes M\to\K_D,
 &\ev(b^*\otimes m)&=b^*(m),
 \label{eq:ev}\\
 \qtr &:M\otimes M^*\to\K_D,
 &\qtr(m_\lambda\otimes \varphi)&=v_{-\lambda}^{2}\varphi(m_\lambda),
 \label{eq:qtr}\\
 \coev &:\K_D\to M^*\otimes M,
 &\coev(1)&=\sum_{b\in\mathcal E}v_{\wt(b)}^{2}b^*\otimes b,
 \label{eq:coev}\\
 \coqtr &:\K_D\to M\otimes M^*,
 &\coqtr(1)&=\sum_{b\in\mathcal E}b\otimes b^*.
 \label{eq:coqtr}
\end{align}
The abbreviations $\ev$ and $\coev$ mean evaluation and coevaluation.
The maps $\qtr$ and $\coqtr$ are the evaluation and coevaluation in the
opposite orientation; the letter ``q'' records the weight-dependent quantum
factor in $\qtr$.
All four maps are $U_{v,t}$-module homomorphisms and are independent of the
chosen homogeneous basis in the basis-dependent formulas
\cite{fan-ma-xing2024}.

% P2.19
Assume now that $M$ is a simple highest-weight module of highest weight
$\Lambda$.
Define the curl-normalization scalar and normalized crossing operators by
\begin{equation}\label{eq:curl}
 a_M(t)=f_t(\Lambda,\Lambda)v_{-\Lambda}^{2},
 \qquad
 C_t^+=a_M(t)^{-1}\widehat R_t^{M,M},
 \qquad
 C_t^-=a_M(t)(\widehat R_t^{M,M})^{-1},
\end{equation}
The operators $C_t^+$ and $C_t^-$ are assigned to $X_+$ and $X_-$,
respectively.
This normalization makes the operator evaluation of a Reidemeister-I curl
equal to the identity; it does not add a new twist operator
\cite{fan-ma-xing2024}.
Together with identity maps and the four maps in
\eqref{eq:ev}--\eqref{eq:coqtr}, these crossing operators satisfy every
defining relation of $\mathsf{OTa}$ \cite{fan-ma-xing2024}.
Consequently, they define a strict monoidal functor
\begin{equation}\label{eq:fmx-tangle-functor}
 \mathcal T_t:(\mathsf{OTa},\otimes,\varnothing)
 \longrightarrow(\mathsf{Mod}_M,\otimes,\K_D),
\end{equation}
where $\mathsf{Mod}_M$ denotes the module category described after
\eqref{eq:signed-word-module}.
A strict monoidal functor assigns an object to every object and a morphism to
every morphism, preserves identities and composition, and preserves tensor
products and the tensor unit exactly.
As shown in Figure~\ref{fig:tangle-generators}, the functor's complete
assignment on the two identity strands and the six nonidentity generators is
recorded; every diagram is read from its source word at the bottom to its
target word at the top.

\begin{figure}[t]
\centering
\includegraphics[width=\textwidth]{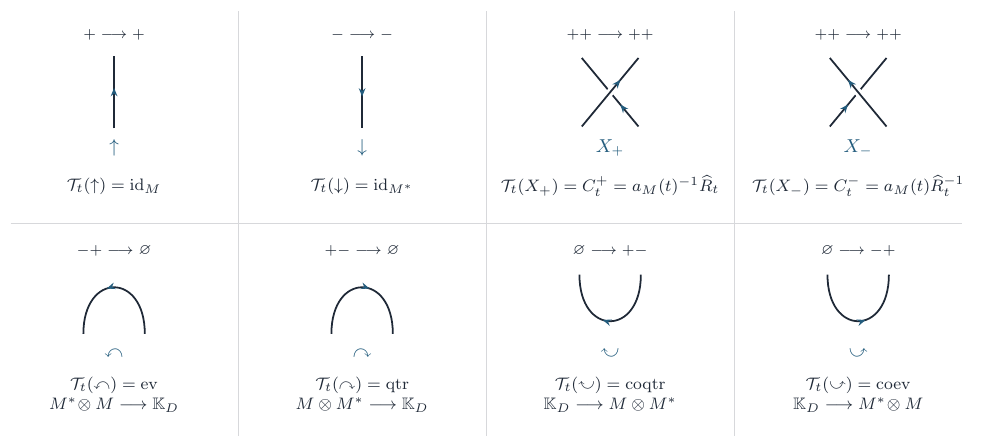}
\caption{The elementary oriented-tangle diagrams and their images under
$\mathcal T_t$; the small arrows on the strands record their orientations.}
\label{fig:tangle-generators}
\end{figure}

In symbols, the object assignment and the generator assignment are
\begin{equation}\label{eq:tangle-generator-dictionary}
\begin{aligned}
 \mathcal T_t(+)&=M,&
 \mathcal T_t(-)&=M^*,&
 \mathcal T_t(\varnothing)&=\K_D,\\
 \mathcal T_t(\uparrow)&=\id_M,&
 \mathcal T_t(\downarrow)&=\id_{M^*},\\
 \mathcal T_t(\curvearrowleft)&=\ev,&
 \mathcal T_t(\curvearrowright)&=\qtr,\\
 \mathcal T_t(\undercurvearrowleft)&=\coqtr,&
 \mathcal T_t(\undercurvearrowright)&=\coev,\\
 \mathcal T_t(X_+)&=a_M(t)^{-1}\widehat R_t^{M,M},&
 \mathcal T_t(X_-)&=a_M(t)(\widehat R_t^{M,M})^{-1}.
\end{aligned}
\end{equation}
The source and target spaces of the four cup-cap maps are displayed in
\eqref{eq:ev}--\eqref{eq:coqtr}, and both crossing operators in
\eqref{eq:tangle-generator-dictionary} map $M\otimes M$ to $M\otimes M$.
Crossings involving a downward-oriented strand are obtained by composing the
displayed crossing with the cup-cap maps, so they are derived morphisms rather
than additional generators.
For composable tangles $T:\mathbf X\to\mathbf Y$ and
$S:\mathbf Y\to\mathbf Z$, and for arbitrary tangles $T$ and $U$, strict
monoidality means
\begin{equation}\label{eq:tangle-functor-operations}
 \mathcal T_t(S\circ T)=\mathcal T_t(S)\circ\mathcal T_t(T),
 \qquad
 \mathcal T_t(T\otimes U)=\mathcal T_t(T)\otimes\mathcal T_t(U).
\end{equation}
Thus vertical layers are evaluated from bottom to top, while the corresponding
operator composition in \eqref{eq:tangle-functor-operations} is read from
right to left.
For a closed oriented link $\mathcal L:\varnothing\to\varnothing$,
$\mathcal T_t(\mathcal L)$ is an endomorphism of the one-dimensional tensor
unit $\K_D$ and hence multiplication by a unique scalar.
We denote that scalar by
\begin{equation}\label{eq:invariant-definition}
 \mathcal T_t(\mathcal L)
 =I_{v,t}^{M}(\mathcal L)\id_{\K_D},
 \qquad I_{v,t}^{M}(\mathcal L)\in\K_D.
\end{equation}
This scalar is the normalized two-parameter oriented-link invariant compared
with its $t=1$ specialization in the next section.

%==============================================================================
\section{Parameter Transport and Equality of Link Invariants}
\label{sec:transport}
%==============================================================================

\subsection{The meaning of transport and the transported module}

% P3.01
The word \emph{transport} is used in this paper for the explicit transfer of
module and tangle-operator data given below; it is not the name of an
additional structure already contained in a quantum algebra.
The complete transport data consist of a change of the generator actions, a
comparison map for tensor products, a comparison map for algebraic duals, and
intertwining identities for the six elementary oriented-tangle operators.
An algebra isomorphism is a bijective linear map between two algebras that
preserves multiplication and the identity element.
In particular, no algebra isomorphism
\begin{equation}\label{eq:no-algebra-isomorphism}
 U_{v,1}\cong U_{v,t}
\end{equation}
is asserted or used.
The conclusion will instead compare the relevant module categories and their
tangle functors by explicit invertible linear maps.

% P3.02
Let \(M=\bigoplus_{\lambda\in L}M_\lambda\) be an \(L\)-weight
\(U_{v,1}\)-module, where \(U_{v,1}\) has already been extended to the common
field \(\K_D\) as explained after \eqref{eq:antipode-torus}.
On the same vector space and the same weight decomposition define
\begin{align}
 K_i^{(t)}m_\lambda
 &=v^{i\cdot\lambda}
   t^{\langle\lambda,i\rangle-\langle i,\lambda\rangle}m_\lambda,
 &
 (K_i')^{(t)}m_\lambda
 &=v^{-i\cdot\lambda}
   t^{\langle\lambda,i\rangle-\langle i,\lambda\rangle}m_\lambda,
 \label{eq:transport-torus}\\
 E_i^{(t)}m_\lambda
 &=t^{\langle\lambda,i\rangle}E_i^{(1)}m_\lambda,
 &
 F_i^{(t)}m_\lambda
 &=t^{d_i-\langle i,\lambda\rangle}F_i^{(1)}m_\lambda.
 \label{eq:transport-roots}
\end{align}
The superscripts \((1)\) and \((t)\) distinguish the one-parameter and
two-parameter actions and do not denote divided powers.
The vector space equipped with these new actions is denoted by \(\Phi_t(M)\).

\begin{theorem}[Module transport]\label{thm:module-transport}
Equations \eqref{eq:transport-torus}--\eqref{eq:transport-roots} define an
\(L\)-weight \(U_{v,t}\)-module on \(\Phi_t(M)\).
On a weight-preserving \(U_{v,1}\)-module homomorphism \(h:M\to N\), define
\(\Phi_t(h)=h\) on the underlying vector spaces.
This makes \(\Phi_t\) an equivalence between the corresponding
\(L\)-weight module categories; its inverse is obtained by replacing every
displayed power of \(t\) by its reciprocal.
Finite dimensionality, integrability, simplicity, and the highest-weight
property are preserved by this equivalence.
\end{theorem}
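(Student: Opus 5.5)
The plan is to check each defining relation of $U_{v,t}$ directly on weight vectors. Every new operator in \eqref{eq:transport-torus}--\eqref{eq:transport-roots} is the old operator multiplied by a power of $t$ that depends only on the source weight. So each relation reduces to comparing exponents of $t$.

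\emph{Toral relations.} The operators $K_i^{(t)}$ and $(K_i')^{(t)}$ are diagonal in the weight decomposition, hence they commute and are invertible. Because $E_j^{(t)}$ maps $M_\lambda$ into $M_{\lambda+j}$, conjugating $E_j^{(t)}$ by $K_i^{(t)}$ multiplies it by the ratio of the $K_i^{(t)}$-eigenvalues at $\lambda+j$ and at $\lambda$. By bilinearity this ratio is $v^{i\cdot j}t^{\langle j,i\rangle-\langle i,j\rangle}$, which is \eqref{eq:torus-E}. The relations with $K_i'$ and with $F_j$ in \eqref{eq:torus-F} follow in the same way.

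\emph{Mixed relation.} On $m_\lambda$, the element $E_i^{(t)}F_j^{(t)}$ acts as $E_i^{(1)}F_j^{(1)}$ times
\[
 t^{\,d_j-\langle j,\lambda\rangle+\langle\lambda-j,i\rangle}.
\]
The element $F_j^{(t)}E_i^{(t)}$ acts as $F_j^{(1)}E_i^{(1)}$ times
\[
 t^{\,\langle\lambda,i\rangle+d_j-\langle j,\lambda+i\rangle}.
\]
These two exponents agree, both being $d_j+\langle\lambda,i\rangle-\langle j,\lambda\rangle-\langle j,i\rangle$ up to the order of the last terms. Hence the commutator is this common scalar times the one-parameter commutator. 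For $i\ne j$ the one-parameter commutator vanishes, so the two-parameter one does too. For $i=j$ we use $\langle i,i\rangle=d_i$, so the scalar becomes $t^{\langle\lambda,i\rangle-\langle i,\lambda\rangle}$. This is exactly the factor that turns the one-parameter toral action on $m_\lambda$ into \eqref{eq:transport-torus}. Therefore \eqref{eq:commutator} holds with the same denominator $v_i-v_i^{-1}$.

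\emph{Serre relations (main obstacle).} For $p+p'=N_{ij}$, the term $E_i^{(p)}E_jE_i^{(p')}$ acting on $m_\lambda$ picks up three kinds of $t$-factors:
\begin{itemize}[label=--]
 \item one factor $t^{\langle\lambda+k\,i,i\rangle}$ for each step $k=0,\ldots,p'-1$ of the first block of $E_i$'s;
 \item the factor $t^{\langle\lambda+p'i,j\rangle}$ from $E_j$;
 \item factors $t^{\langle\lambda+j+k\,i,i\rangle}$ for $k=p',\ldots,N_{ij}-1$ from the second block.
\end{itemize}
On top of these come the divided-power denominators, which by \eqref{eq:factorial} contribute $t_i^{-p(p-1)/2-p'(p'-1)/2}$, and the coefficient $\kappa_{ij}(p,p')$ from \eqref{eq:kappa}.

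I will collect the total exponent as a polynomial in $p$ with $p'=N_{ij}-p$. The claim to verify is that the $p$-dependent part cancels exactly. The cross term $-d_ipp'$ should absorb the quadratic parts coming from the divided powers. The linear terms $p(A_{ij}-B_{ij})$ should absorb the difference between $\langle j,i\rangle$ and $\langle i,j\rangle$ that arises from placing $E_j$ at position $p'$. Once this holds, the two-parameter Serre sum equals a single overall power of $t$ times the one-parameter Serre sum with coefficients $(-1)^p$, which vanishes. The $F$-relation \eqref{eq:serre-F} is handled the same way using the $d_i-\langle i,\lambda\rangle$ exponents. This bookkeeping is the step I expect to require the most care.

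\emph{Inverse and categorical properties.} Replacing $t$ by $t^{-1}$ in \eqref{eq:transport-roots} inverts the scalar factors. The same computations, run in reverse, show that this produces a $U_{v,1}$-module from any $L$-weight $U_{v,t}$-module, so the two constructions are mutually inverse on objects. For morphisms, a weight-preserving linear map commutes with operators of the form $E_i^{(1)}$ rescaled by weight-dependent scalars exactly when it commutes with $E_i^{(1)}$ itself, and similarly for $F_i$ and the toral operators. Hence $\Phi_t(h)=h$ is a morphism, and $\Phi_t$ is a functor with inverse functor $\Phi_{t^{-1}}$.

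\emph{Preserved properties.} Finite dimensionality is immediate, since the underlying vector space is unchanged. Integrability holds because $(E_i^{(t)})^n$ differs from $(E_i^{(1)})^n$ only by nonzero scalars on each weight vector, and likewise for $F_i$. For simplicity, the key point is that the toral eigenvalues $v^{\pm i\cdot\lambda}$ determine $\lambda$, because the symmetric form is positive definite. Hence every submodule in either setting is a sum of its weight components. Such a graded subspace is stable under $E_i^{(t)}$ exactly when it is stable under $E_i^{(1)}$, and similarly for $F_i$. So the submodule lattices coincide, and simplicity is preserved. A vector is killed by every $E_i^{(t)}$ exactly when it is killed by every $E_i^{(1)}$, so the highest-weight property is preserved as well.
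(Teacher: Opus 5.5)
Your proposal is correct and follows the paper's proof essentially step for step: eigenvalue ratios for the toral relations, and a common weight-dependent $t$-multiplier for $E_iF_j-F_jE_i$. It also uses $p$-independence of the total $t$-exponent in each Serre sum, and the fact that every added factor depends only on the input weight, which handles the morphisms, the inverse functor, and the preserved properties; your submodule-lattice argument for simplicity is a slightly more explicit version of the paper's appeal to the equivalence.

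The Serre bookkeeping you left as a claim does close as you predict. The total exponent is $N_{ij}\langle\lambda,i\rangle+\langle\lambda,j\rangle+N_{ij}A_{ij}$ for \eqref{eq:serre-E} and $-N_{ij}\langle i,\lambda\rangle-\langle j,\lambda\rangle+N_{ij}d_i+d_j+N_{ij}A_{ij}$ for \eqref{eq:serre-F}. In both cases the divided powers remove the within-block sums $d_ip(p-1)/2$ and $d_ip'(p'-1)/2$. The factor $t^{-d_ipp'}$ in $\kappa_{ij}(p,p')$ removes the leftover cross term $d_ipp'$. Finally, $p(A_{ij}-B_{ij})$ combines with $p'A_{ij}+pB_{ij}$ to give $N_{ij}A_{ij}$.
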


\begin{proof}
We verify the defining relations rather than infer them from an algebra
isomorphism.
The toral generators are diagonal and invertible by
\eqref{eq:transport-torus}.
For concise notation, let
\[
 k_i(\lambda)=v^{i\cdot\lambda}
 t^{\langle\lambda,i\rangle-\langle i,\lambda\rangle},
 \qquad
 k_i'(\lambda)=v^{-i\cdot\lambda}
 t^{\langle\lambda,i\rangle-\langle i,\lambda\rangle};
\]
these are the respective eigenvalues of \(K_i^{(t)}\) and
\((K_i')^{(t)}\) on the weight space \(M_\lambda\).
Because \(E_j^{(t)}\) raises a weight by \(j\) and \(F_j^{(t)}\) lowers a
weight by \(j\), the four required eigenvalue quotients are
\begin{equation}\label{eq:transport-toral-check}
\begin{aligned}
 \frac{k_i(\lambda+j)}{k_i(\lambda)}
 &=v^{i\cdot j}t^{\langle j,i\rangle-\langle i,j\rangle},&
 \frac{k_i'(\lambda+j)}{k_i'(\lambda)}
 &=v^{-i\cdot j}t^{\langle j,i\rangle-\langle i,j\rangle},\\
 \frac{k_i(\lambda-j)}{k_i(\lambda)}
 &=v^{-i\cdot j}t^{\langle i,j\rangle-\langle j,i\rangle},&
 \frac{k_i'(\lambda-j)}{k_i'(\lambda)}
 &=v^{i\cdot j}t^{\langle i,j\rangle-\langle j,i\rangle}.
\end{aligned}
\end{equation}
The two quotients in the first row prove the two relations in
\eqref{eq:torus-E}, and the two quotients in the second row prove the two
relations in \eqref{eq:torus-F}.

For \(i\ne j\), each of
\(E_i^{(t)}F_j^{(t)}m_\lambda\) and
\(F_j^{(t)}E_i^{(t)}m_\lambda\) is its one-parameter counterpart multiplied by
\begin{equation}\label{eq:off-diagonal-factor}
 t^{d_j-\langle j,\lambda\rangle+
       \langle\lambda,i\rangle-\langle j,i\rangle}.
\end{equation}
For \(i=j\), their common multiplier is
\begin{equation}\label{eq:diagonal-factor}
 t^{\langle\lambda,i\rangle-\langle i,\lambda\rangle}.
\end{equation}
Multiplying the one-parameter commutator by
\eqref{eq:diagonal-factor} gives \eqref{eq:commutator}, because the same
factor multiplies \(K_i^{(1)}-(K_i')^{(1)}\).

It remains to check the two Serre relations.
Repeated use of \eqref{eq:transport-roots} and the factorial identity
\eqref{eq:factorial} gives
\begin{align}
 (E_i^{(t)})^{(p)}m_\lambda
 &=t^{p\langle\lambda,i\rangle}
   (E_i^{(1)})^{(p)}m_\lambda,
 \label{eq:transport-E-divided}\\
 (F_i^{(t)})^{(p)}m_\lambda
 &=t^{-p\langle i,\lambda\rangle+pd_i}
   (F_i^{(1)})^{(p)}m_\lambda.
 \label{eq:transport-F-divided}
\end{align}
Indeed, before division by the factorial, the \(E_i\)-exponent is
\(p\langle\lambda,i\rangle+d_ip(p-1)/2\), and the factorial removes the
second summand.
For \(F_i\), the corresponding exponent before division is
\(-p\langle i,\lambda\rangle+d_ip(p+1)/2\), and the factorial again removes
\(d_ip(p-1)/2\).

Fix \(i\ne j\), write \(N=N_{ij}\), \(A=A_{ij}\), and \(B=B_{ij}\), and let
\(p+p'=N\).
After including the coefficient \(\kappa_{ij}(p,p')\), the term
\[
 (E_i^{(t)})^{(p)}E_j^{(t)}
 (E_i^{(t)})^{(p')}m_\lambda
\]
has total \(t\)-exponent
\begin{equation}\label{eq:positive-Serre-common}
 N\langle\lambda,i\rangle+\langle\lambda,j\rangle+NA,
\end{equation}
which is independent of \(p\).
Similarly, the term
\[
 (F_i^{(t)})^{(p')}F_j^{(t)}
 (F_i^{(t)})^{(p)}m_\lambda
\]
has total \(t\)-exponent
\begin{equation}\label{eq:negative-Serre-common}
 -N\langle i,\lambda\rangle-\langle j,\lambda\rangle
 +Nd_i+d_j+NA,
\end{equation}
which is also independent of \(p\).
Thus each two-parameter Serre sum is a nonzero common scalar times its
one-parameter Serre sum and is zero.
All defining relations have now been checked.

A weight-preserving homomorphism commutes with the four transported actions
because every added factor depends only on the input weight.
The reciprocal formulas give a two-sided inverse functor.
Equations \eqref{eq:transport-E-divided}--\eqref{eq:transport-F-divided} show
that local nilpotence is preserved, and
\eqref{eq:transport-roots} shows that highest-weight vectors are unchanged.
Simplicity is preserved by an equivalence, and finite dimensionality is
unchanged because the underlying vector space is unchanged.
\end{proof}

\subsection{The tensor comparison}

% P3.03
For \(U_{v,1}\) \(L\)-weight modules \(M,N\), define
\begin{equation}\label{eq:tensorator}
 \begin{split}
 J_{M,N}:\;&
 \Phi_t(M)\otimes_{\Delta_t}\Phi_t(N)
 \longrightarrow
 \Phi_t(M\otimes_{\Delta_1}N),\\
 &J_{M,N}(m_\lambda\otimes n_\mu)
 =t^{\langle\mu,\lambda\rangle}
  m_\lambda\otimes n_\mu.
 \end{split}
\end{equation}
The subscripts \(\Delta_t\) and \(\Delta_1\) specify which coproduct supplies
the tensor-product action.
The map \(J_{M,N}\) is called the \emph{tensor comparison} or
\emph{tensorator}: it compares the tensor product formed after transport with
the transport of the tensor product formed before transport.

\begin{proposition}[Strong monoidal comparison]\label{prop:tensorator}
The map \(J_{M,N}\) is an invertible \(U_{v,t}\)-module homomorphism.
For a third module \(P\), the maps satisfy
\begin{equation}\label{eq:tensorator-coherence}
 J_{M\otimes N,P}(J_{M,N}\otimes\id_P)
 =
 J_{M,N\otimes P}(\id_M\otimes J_{N,P}),
\end{equation}
and \(J\) is the identity if either factor is the one-dimensional
weight-zero trivial module \(\K_D\).
Consequently, \((\Phi_t,J)\) is a strong monoidal equivalence.
The term \emph{strong monoidal} means exactly that the object comparison is
invertible, satisfies \eqref{eq:tensorator-coherence}, and respects the tensor
unit.
\end{proposition}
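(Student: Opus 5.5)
The plan is to verify every claim directly on homogeneous tensors, in the same spirit as the proof of Theorem~\ref{thm:module-transport}. Since \(J_{M,N}\) multiplies \(m_\lambda\otimes n_\mu\) by the nonzero scalar \(t^{\langle\mu,\lambda\rangle}\in\K_D\), it is diagonal in a homogeneous basis and hence invertible. Its inverse is the diagonal map with the reciprocal scalar. It also preserves the total weight \(\lambda+\mu\), and both source and target carry the weight grading \((M\otimes N)_\nu=\bigoplus_{\lambda+\mu=\nu}M_\lambda\otimes N_\mu\). So the substance of the first claim is that \(J\) intertwines the generator actions. For \(K_i\) and \(K_i'\) this holds because both sides act on \(m_\lambda\otimes n_\mu\) by the type-\(1\) eigenvalue at \(\lambda+\mu\). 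On the source this uses \(\Delta(K_i)=K_i\otimes K_i\) and the multiplicativity of \(k_i\) in the weight. On the target it is the definition \eqref{eq:transport-torus}.

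For \(E_i\) I will compute both composites on \(m_\lambda\otimes n_\mu\) and compare them term by term, using \(\Delta(E_i)=E_i\otimes1+K_i\otimes E_i\) for both \(\Delta_t\) and \(\Delta_1\).

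For the term \(E_i\otimes1\): applying \(E_i^{(t)}\otimes1\) first and then \(J\) gives the factor \(t^{\langle\lambda,i\rangle}\cdot t^{\langle\mu,\lambda+i\rangle}\). Applying \(J\) first and then the transported action on the weight \(\lambda+\mu\) gives \(t^{\langle\mu,\lambda\rangle}\cdot t^{\langle\lambda+\mu,i\rangle}\). These agree by bilinearity.

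For the term \(K_i\otimes E_i\): the first composite carries \(v^{i\cdot\lambda}t^{\langle\lambda,i\rangle-\langle i,\lambda\rangle}\cdot t^{\langle\mu,i\rangle}\cdot t^{\langle\mu+i,\lambda\rangle}\). The second carries \(t^{\langle\mu,\lambda\rangle}t^{\langle\lambda+\mu,i\rangle}v^{i\cdot\lambda}\), where \(v^{i\cdot\lambda}\) is the one-parameter eigenvalue of \(K_i^{(1)}\). The mixed terms \(\pm\langle i,\lambda\rangle\) cancel, so the two agree.

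The check for \(F_i\), with \(\Delta(F_i)=1\otimes F_i+F_i\otimes K_i'\), is the same bookkeeping with the extra constant \(t^{d_i}\). That constant appears once on each side: from the transported \(F_i\) acting on one tensor factor, and from the transported \(F_i\) acting on the weight \(\lambda+\mu\). I expect this \(F_i\) computation to be the main (though routine) obstacle. The sign conventions in \(t^{d_i-\langle i,\lambda\rangle}\), in \(k_i'\), and in the shifted weight \(\lambda-i\) inside \(J\) must be tracked so that the \(\langle i,\lambda\rangle\) and \(\langle\mu,i\rangle\) terms cancel exactly. I will write out all four exponents explicitly before comparing them.

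For the coherence identity \eqref{eq:tensorator-coherence}, I will evaluate both sides on \(m_\lambda\otimes n_\mu\otimes p_\nu\). The left side gives \(t^{\langle\mu,\lambda\rangle}t^{\langle\nu,\lambda+\mu\rangle}\), and the right side gives \(t^{\langle\nu,\mu\rangle}t^{\langle\mu+\nu,\lambda\rangle}\). Both equal \(t^{\langle\mu,\lambda\rangle+\langle\nu,\lambda\rangle+\langle\nu,\mu\rangle}\) by bilinearity of the Ringel form on \(L\). For the unit, if either factor is the trivial module \(\K_D\), its only weight is \(0\), so the scalar is \(t^{0}=1\). Moreover \(\Phi_t(\K_D)=\K_D\) as a \(U_{v,t}\)-module, because the transported toral actions are trivial at weight \(0\). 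Combining these facts with the equivalence of Theorem~\ref{thm:module-transport} gives the strong monoidal equivalence. Naturality of \(J\) in \(M\) and \(N\) holds because weight-preserving homomorphisms commute with weight-diagonal scalar maps.
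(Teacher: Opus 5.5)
Your proposal is correct and takes essentially the same route as the paper. Like the paper, you check directly on homogeneous tensors that the diagonal scalar $t^{\langle\mu,\lambda\rangle}$ intertwines both summands of $\Delta(E_i)$ and $\Delta(F_i)$, and the common factors you obtain agree with the paper's. You then get coherence from the triple-tensor exponent and the unit condition from bilinearity. Your extra remarks, that $\Phi_t(\K_D)=\K_D$ and that $J$ is natural with respect to weight-preserving homomorphisms, make explicit two points the paper leaves implicit in its final ``consequently''.
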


\begin{proof}
On \(m_\lambda\otimes n_\mu\), the two summands in the \(E_i\)-intertwining
identity on the two sides acquire, respectively, the common factors
\begin{equation}\label{eq:tensorator-E-factors}
 t^{\langle\mu,\lambda\rangle+\langle\lambda+\mu,i\rangle},
 \qquad
 v^{i\cdot\lambda}
 t^{\langle\mu,\lambda\rangle+\langle\lambda+\mu,i\rangle}.
\end{equation}
The two \(F_i\)-summands acquire
\begin{equation}\label{eq:tensorator-F-factors}
 t^{\langle\mu,\lambda\rangle+d_i-\langle i,\lambda+\mu\rangle},
 \qquad
 v^{-i\cdot\mu}
 t^{\langle\mu,\lambda\rangle+d_i-\langle i,\lambda+\mu\rangle}.
\end{equation}
These equalities follow by substituting \eqref{eq:transport-roots} into
\(\Delta_t(E_i)\) and \(\Delta_t(F_i)\), so \(J_{M,N}\) intertwines
\(E_i\) and \(F_i\).
For \(K_i\), both sides multiply the tensor by
\begin{equation}\label{eq:tensorator-K-factor}
 v^{i\cdot(\lambda+\mu)}
 t^{\langle\lambda+\mu,i\rangle-\langle i,\lambda+\mu\rangle};
\end{equation}
for \(K_i'\), the exponent of \(v\) is negated.
Thus \(J_{M,N}\) is a module homomorphism.

On \(m_\lambda\otimes n_\mu\otimes p_\nu\), both sides of
\eqref{eq:tensorator-coherence} multiply by
\begin{equation}\label{eq:triple-tensorator-factor}
 t^{\langle\mu,\lambda\rangle+
    \langle\nu,\lambda\rangle+\langle\nu,\mu\rangle}.
\end{equation}
This proves coherence.
Negating the exponent in \eqref{eq:tensorator} gives the inverse, and
bilinearity gives
\(t^{\langle0,\lambda\rangle}=t^{\langle\lambda,0\rangle}=1\) for the tensor
unit.
\end{proof}

\subsection{Transport of the quasi-\texorpdfstring{$R$}{R}-matrix and crossings}

% P3.04
The next lemma compares the canonical tensors in \eqref{eq:theta}.
The notation \(\Theta_{1,\nu}\) means the component obtained at \(t=1\),
extended to \(\K_D\), and acting through the one-parameter module structures.
For a word \(\mathbf i=(i_1,\ldots,i_r)\) in \(I\), write
\(|\mathbf i|=i_1+\cdots+i_r\), and define two word exponents by
\begin{equation}\label{eq:word-exponents}
 c_+(\mathbf i)=\sum_{1\leq a<b\leq r}\langle i_b,i_a\rangle,
 \qquad
 c_-(\mathbf i)=\sum_{a=1}^r d_{i_a}
 +\sum_{1\leq a<b\leq r}\langle i_a,i_b\rangle.
\end{equation}
For \(\nu\in Q^+\), define graded linear comparison maps
\begin{align}
 \chi_\nu^+ &:U_{v,t,\nu}^+\longrightarrow U_{v,1,\nu}^+,
 &\chi_\nu^+(E_{i_1}\cdots E_{i_r})
 &=t^{c_+(\mathbf i)}E_{i_1}\cdots E_{i_r},
 \label{eq:chi-plus}\\
 \chi_\nu^- &:U_{v,t,-\nu}^-\longrightarrow U_{v,1,-\nu}^-,
 &\chi_\nu^-(F_{i_1}\cdots F_{i_r})
 &=t^{c_-(\mathbf i)}F_{i_1}\cdots F_{i_r},
 \label{eq:chi-minus}
\end{align}
where \(|\mathbf i|=\nu\), and the generators on the right are the
one-parameter generators.
These maps are linear comparisons of graded vector spaces; they are not
asserted to preserve multiplication.

\begin{lemma}[Graded pairing comparison]\label{lem:graded-pairing-comparison}
The formulas \eqref{eq:chi-plus}--\eqref{eq:chi-minus} descend from words to
well-defined linear isomorphisms on the displayed homogeneous subspaces.
For \(x\in U_{v,t,\nu}^+\), \(y\in U_{v,t,-\nu}^-\),
\(m_\lambda\in M_\lambda\), and \(n_\mu\in N_\mu\), they satisfy
\begin{align}
 x^{(t)}m_\lambda
 &=t^{\langle\lambda,\nu\rangle}
   \chi_\nu^+(x)^{(1)}m_\lambda,
 \label{eq:chi-plus-action}\\
 y^{(t)}n_\mu
 &=t^{-\langle\nu,\mu\rangle}
   \chi_\nu^-(y)^{(1)}n_\mu,
 \label{eq:chi-minus-action}\\
 (x,y)_{\phi,t}
 &=t^{-\langle\nu,\nu\rangle}
   \bigl(\chi_\nu^+(x),\chi_\nu^-(y)\bigr)_{\phi,1}.
 \label{eq:pairing-transport}
\end{align}
The subscripts on the pairing distinguish the two-parameter pairing from its
\(t=1\) specialization.
\end{lemma}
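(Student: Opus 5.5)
The plan is to treat each \(\chi_\nu^\pm\) as a cocycle twist of the free algebra on the generators, so that well-definedness reduces to the Serre computation already done in Theorem~\ref{thm:module-transport}. The action formulas then follow by induction on word length, and the pairing formula by induction on the degree of the \(F\)-argument.

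\textbf{Well-definedness.} Let \(\mathcal F^+\) be the free algebra on the letters \(E_i\), graded by \(Q^+\). Define \(\tilde\chi^+\) on words by the formula in \eqref{eq:chi-plus}.

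If \(\mathbf i\) has degree \(\alpha\) and \(\mathbf j\) has degree \(\beta\), the definition \eqref{eq:word-exponents} gives
\[
c_+(\mathbf i\mathbf j)=c_+(\mathbf i)+c_+(\mathbf j)+\langle\beta,\alpha\rangle .
\]
Hence
\[
\tilde\chi^+(xy)=t^{\langle\beta,\alpha\rangle}\tilde\chi^+(x)\tilde\chi^+(y)
\]
for homogeneous \(x,y\). So \(\tilde\chi^+\) is an algebra isomorphism from \(\mathcal F^+\) onto the free algebra with multiplication twisted by the bicharacter \(t^{\langle\beta,\alpha\rangle}\). Its inverse is the same formula with \(t\) replaced by \(t^{-1}\).

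Because the twist is multiplicative, \(\tilde\chi^+\) sends a two-sided ideal to a two-sided ideal. It therefore suffices to check that \(\tilde\chi^+\) sends the two-parameter Serre element \eqref{eq:serre-E} to a nonzero scalar multiple of the one-parameter Serre element. This is the same exponent bookkeeping as \eqref{eq:positive-Serre-common}: the factor \(t^{-d_ipp'}\) from the factorials \eqref{eq:factorial} and the factor from \(c_+\) of \(E_i^pE_jE_i^{p'}\) combine with \(\kappa_{ij}(p,p')\) to a \(p\)-independent power of \(t\). Using the known presentation of \(U^\pm\) by Serre relations in finite type, this proves that \(\chi^+_\nu\) descends and is bijective in each degree. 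The argument for \(\chi^-\) is identical, with \(c_-(\mathbf i\mathbf j)=c_-(\mathbf i)+c_-(\mathbf j)+\langle\alpha,\beta\rangle\).

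\textbf{Action formulas.} By linearity it is enough to treat a single word \(x=E_{i_1}\cdots E_{i_r}\) and induct on \(r\), using \eqref{eq:transport-roots}. The letter \(E_{i_a}\) acts on weight \(\lambda+\sum_{b>a}i_b\), so it contributes the exponent
\[
\langle\lambda,i_a\rangle+\sum_{b>a}\langle i_b,i_a\rangle .
\]
Summing over \(a\) gives \(\langle\lambda,\nu\rangle+c_+(\mathbf i)\), which is \eqref{eq:chi-plus-action}. Similarly, \(F_{i_a}\) acts on weight \(\mu-\sum_{b>a}i_b\) and contributes
\[
d_{i_a}-\langle i_a,\mu\rangle+\sum_{b>a}\langle i_a,i_b\rangle .
\]
Summing gives \(-\langle\nu,\mu\rangle+c_-(\mathbf i)\), which is \eqref{eq:chi-minus-action}.

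\textbf{Pairing formula.} I would induct on the length of a word \(y=F_jy'\). By \eqref{eq:skew-Hopf-products}, \((x,F_jy')_\phi=(\Delta(x),F_j\otimes y')_\phi\). By \eqref{eq:skew-Hopf-mixed-zero} and the degree constraint, the only surviving component of \(\Delta(x)\) has the form \(E_jK_{\nu-j}\otimes r_j(x)\), where \(r_j\) is the associated skew-derivation. This gives
\[
(x,F_jy')_\phi=(E_j,F_j)_\phi\,\bigl(r_j(x),y'\bigr)_\phi ,
\]
possibly up to a toral character \(\{\cdot,\cdot\}\) coming from \(K_{\nu-j}\).

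Next I would compute \(r_j\) on words from \eqref{eq:coproduct-roots}. Each time an \(E_{i_a}\) standing to the left of the removed letter is passed, a factor involving \(K_{i_a}\) appears, and its eigenvalue on the relevant \(E\)-word contributes \(t^{\langle\cdot,i_a\rangle-\langle i_a,\cdot\rangle}\) through \eqref{eq:toral-pairing-character}. From this I would derive a relation of the form
\[
\chi^+\bigl(r_j^{(t)}(x)\bigr)=t^{e(j,\nu)}\,r_j^{(1)}\bigl(\chi^+(x)\bigr)
\]
with an explicit linear exponent \(e(j,\nu)\). Combining it with \(c_-(j\mathbf i')=c_-(\mathbf i')+d_j+\langle j,\nu-j\rangle\) and the induction hypothesis \(t^{-\langle\nu-j,\nu-j\rangle}\), the exponents should total \(-\langle\nu,\nu\rangle\). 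The base case is \((E_i,F_i)_\phi\), which is independent of \(t\); there the \(d_i\) in \(c_-\) must be offset by the \(-\langle i,i\rangle=-d_i\).

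\textbf{Main obstacle.} I expect the hardest step to be this last one: tracking the \(t\)-powers of \(r_j\) under \(\chi^+\). In particular, one must confirm that the toral pairing contributions from the skew-Hopf axiom produce exactly the antisymmetric exponents needed, and that no \(\nu\)-dependent residue remains.
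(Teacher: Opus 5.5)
Your proposal is correct. For the action formulas and the Serre check it is the paper's own computation; the differences lie in how the argument is organized.

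\textbf{Well-definedness.} Your identity $c_+(\mathbf i\mathbf j)=c_+(\mathbf i)+c_+(\mathbf j)+\langle\beta,\alpha\rangle$, and its analogue for $c_-$, shows that $\chi^\pm$ is a cocycle twist of the free algebra. This explains why checking the Serre elements themselves suffices for the whole two-sided ideal, a point the paper leaves implicit. One small slip: the factor $t^{-d_ipp'}$ belongs to $\kappa_{ij}(p,p')$, while the factorials remove $d_i\binom{p}{2}+d_i\binom{p'}{2}$.

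\textbf{Pairing formula.} The paper recurses on the $E$-side. It splits off the last letter $E_{i_r}$ using $(xx',y)_\phi=(x\otimes x',\Delta^{\mathrm{op}}(y))_\phi$ and expands $\Delta(F_{j_1}\cdots F_{j_r})$. You recurse on the $F$-side using $(x,yy')_\phi=(\Delta(x),y\otimes y')_\phi$ and the skew-derivation $r_j$. The two recursions are mirror images.

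The step you flagged as the main obstacle does close:
\begin{itemize}
\item No extra toral character arises, because $(E_jK_{\nu-j},F_j)_\phi=(E_j,F_j)_\phi$.
\item On a word $E_{i_1}\cdots E_{i_r}$, consider the summand of $r_j$ that deletes the $k$th letter $i_k=j$. Compared with the corresponding summand of $r_j^{(1)}(\chi^+(x))$, applying $\chi^+$ to $r_j^{(t)}(x)$ produces the relative factor $t^{-\sum_{a\ne k}\langle i_a,j\rangle}$.
\item Hence $e(j,\nu)=-\langle\nu-j,j\rangle$, independently of $k$.
\item Combining this with $c_-(j\mathbf j')=c_-(\mathbf j')+d_j+\langle j,\nu-j\rangle$ and the inductive factor gives $-\langle\nu-j,\nu-j\rangle-\langle\nu-j,j\rangle-d_j-\langle j,\nu-j\rangle=-\langle\nu,\nu\rangle$, as you predicted.
\end{itemize}
This independence of $k$ is exactly the counterpart of the paper's identity $A(\mathbf i,\mathbf j)-A(\mathbf i',\widehat{\mathbf j}_k)=T_k$.

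\textbf{What each route gives.} Your version packages the $t$-dependence into the single operator identity $\chi^+\circ r_j^{(t)}=t^{-\langle\nu-j,j\rangle}\,r_j^{(1)}\circ\chi^+$, which is conceptually tidy. It presupposes that $r_j$ is well defined on $U^+$, or else a word-level formulation. The paper's word recursion sidesteps that point directly.
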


\begin{proof}
In the word \(E_{i_1}\cdots E_{i_r}\), the rightmost generator acts first.
Repeated use of \eqref{eq:transport-roots} therefore gives the exponent
\begin{equation}\label{eq:positive-word-action}
 \sum_{a=1}^r
 \left\langle\lambda+\sum_{b>a}i_b,i_a\right\rangle
 =\langle\lambda,\nu\rangle+c_+(\mathbf i),
\end{equation}
which proves \eqref{eq:chi-plus-action} on words.
The same calculation for \(F_{i_1}\cdots F_{i_r}\) gives
\begin{equation}\label{eq:negative-word-action}
 \sum_{a=1}^r
 \left(d_{i_a}-
 \left\langle i_a,\mu-\sum_{b>a}i_b\right\rangle\right)
 =-\langle\nu,\mu\rangle+c_-(\mathbf i),
\end{equation}
which proves \eqref{eq:chi-minus-action} on words.

It remains to check that the word formulas respect the defining relations of
the positive and negative halves.
The factorial identity \eqref{eq:factorial} first gives

\begin{equation}\label{eq:chi-divided-powers}
 \chi_{pi}^+\bigl(E_i^{(p)}\bigr)=(E_i^{(p)})_{t=1},
 \qquad
 \chi_{pi}^-\bigl(F_i^{(p)}\bigr)=t^{pd_i}(F_i^{(p)})_{t=1}.
\end{equation}
For \(p+p'=N_{ij}\), direct substitution of
\eqref{eq:word-exponents}, \eqref{eq:kappa}, and
\eqref{eq:chi-divided-powers} shows that every summand of the positive Serre
relation is mapped to \(t^{N_{ij}A_{ij}}\) times the corresponding summand of
the one-parameter Serre relation.
Every summand of the negative Serre relation is mapped to
\(t^{N_{ij}d_i+d_j+N_{ij}A_{ij}}\) times its one-parameter counterpart.
The factors are independent of \(p\), so both Serre relations are preserved.
Applying the same construction with all exponents negated gives inverse maps;
hence \(\chi_\nu^+\) and \(\chi_\nu^-\) are well-defined isomorphisms.

We now prove \eqref{eq:pairing-transport} without suppressing the coproduct
calculation.
For words \(\mathbf i=(i_1,\ldots,i_r)\) and
\(\mathbf j=(j_1,\ldots,j_r)\) of the same root degree \(\nu\), put
\[
 P_t(\mathbf i,\mathbf j)
 =\bigl(E_{i_1}\cdots E_{i_r},
        F_{j_1}\cdots F_{j_r}\bigr)_{\phi,t},
 \qquad g_i=(v_i^{-1}-v_i)^{-1}.
\]
Write \(\{\alpha,\beta\}_t\) for the scalar in
\eqref{eq:toral-pairing-character} and
\(\{\alpha,\beta\}_1\) for its value at \(t=1\).
Let \(i=i_r\), let \(\mathbf i'=(i_1,\ldots,i_{r-1})\), and let
\(\widehat{\mathbf j}_k\) denote the word obtained from \(\mathbf j\) by
deleting \(j_k\).
The skew-Hopf rule gives the exact recursion
\begin{equation}\label{eq:word-pairing-recurrence}
 P_t(\mathbf i,\mathbf j)
 =g_i\sum_{\substack{1\leq k\leq r\\ j_k=i}}
   \left(\prod_{\ell>k}\{j_\ell,i\}_t\right)
   P_t(\mathbf i',\widehat{\mathbf j}_k).
\end{equation}
Indeed, before applying the flip \(P\) in
\(\Delta^{\mathrm{op}}=P\circ\Delta\), the relevant term of
\(\Delta(F_{j_1}\cdots F_{j_r})\) must choose
\(F_i\otimes K_i'\) in position \(k\) and
\(1\otimes F_{j_\ell}\) in every other position.
After the flip, its second tensor factor is \(F_i\), while its first tensor
factor is the word obtained from \(F_{j_1}\cdots F_{j_r}\) by replacing the
\(k\)th letter with \(K_i'\); hence it is the only kind of term that can pair
nontrivially with
\(E_{i_1}\cdots E_{i_{r-1}}\otimes E_i\).
Moving \(K_i'\) to the right uses
\(K_i'F_{j_\ell}=\{j_\ell,i\}_tF_{j_\ell}K_i'\), producing the displayed
product; the final toral factor pairs with the counit component and contributes
one.
The last tensor factor contributes
\((E_i,F_i)_{\phi,t}=g_i\).

We prove the word identity
\begin{equation}\label{eq:word-pairing-transport}
 P_t(\mathbf i,\mathbf j)
 =t^{A(\mathbf i,\mathbf j)}P_1(\mathbf i,\mathbf j),
 \qquad
 A(\mathbf i,\mathbf j)
 =-\langle\nu,\nu\rangle+c_+(\mathbf i)+c_-(\mathbf j),
\end{equation}
by induction on the common word length \(r\).
For \(r=0\), both pairings of the empty words equal one and the exponent is
zero.
Fix a summand of \eqref{eq:word-pairing-recurrence}, set
\(\mu=\nu-i\), and apply the induction hypothesis to the two shortened
words.
The ratio of the toral products at \(t\) and at \(t=1\) is
\begin{equation}\label{eq:toral-product-ratio}
 \prod_{\ell>k}
 \frac{\{j_\ell,i\}_t}{\{j_\ell,i\}_1}
 =t^{T_k},
 \qquad
 T_k=\sum_{\ell>k}
 \bigl(\langle i,j_\ell\rangle-\langle j_\ell,i\rangle\bigr).
\end{equation}
On the other hand, bilinearity and \(\langle i,i\rangle=d_i\) give
\begin{align}
 &A(\mathbf i,\mathbf j)
   -A(\mathbf i',\widehat{\mathbf j}_k)\notag\\
 &=-\bigl(\langle\mu,i\rangle+\langle i,\mu\rangle+d_i\bigr)
   +\langle i,\mu\rangle+d_i
   +\sum_{\ell<k}\langle j_\ell,i\rangle
   +\sum_{\ell>k}\langle i,j_\ell\rangle\notag\\
 &=\sum_{\ell>k}
   \bigl(\langle i,j_\ell\rangle-\langle j_\ell,i\rangle\bigr)
 =T_k.
 \label{eq:pairing-exponent-difference}
\end{align}
Thus every summand in the \(t\)-recursion is
\(t^{A(\mathbf i,\mathbf j)}\) times the corresponding summand in the
\(t=1\) recursion, proving \eqref{eq:word-pairing-transport}.
Finally, the two comparison maps contribute
\(t^{c_+(\mathbf i)+c_-(\mathbf j)}\) to the one-parameter pairing.
Equation \eqref{eq:word-pairing-transport} is therefore precisely
\eqref{eq:pairing-transport} for monomials, and bilinearity extends it to all
homogeneous \(x\) and \(y\).
\end{proof}

\begin{lemma}[Quasi-\(R\) transport]\label{lem:theta-transport}
Let \(n_\mu\in N_\mu\), \(m_\lambda\in M_\lambda\), and
\(\nu\in Q^+\).
On the transported modules,
\begin{equation}\label{eq:theta-transport}
 \Theta_{t,\nu}(n_\mu\otimes m_\lambda)
 =
 t^{S_\nu(\mu,\lambda)}
 \Theta_{1,\nu}(n_\mu\otimes m_\lambda),
\end{equation}
where
\begin{equation}\label{eq:S}
 S_\nu(\mu,\lambda)
 =
 \langle\lambda,\nu\rangle-\langle\nu,\mu\rangle
 +\langle\nu,\nu\rangle.
\end{equation}
\end{lemma}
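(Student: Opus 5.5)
We choose a basis of \(U_{v,1,-\nu}^-\) and transport it through \(\chi_\nu^-\), so that the paired-dual basis at \(t\) is an explicit rescaling of the paired-dual basis at \(t=1\). Then Lemma~\ref{lem:graded-pairing-comparison} converts the action of each tensor factor.

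First, fix a basis \(\mathcal B_\nu^{(1)}\) of \(U_{v,1,-\nu}^-\) with paired-dual basis \(\{c^\sharp\}\) for \((\ ,\ )_{\phi,1}\). Since \(\chi_\nu^-\) is a linear isomorphism, \(\mathcal B_\nu=\{(\chi_\nu^-)^{-1}(c): c\in\mathcal B_\nu^{(1)}\}\) is a basis of \(U_{v,t,-\nu}^-\). For \(b=(\chi_\nu^-)^{-1}(c)\), I claim that
\[
 b^\sharp=t^{\langle\nu,\nu\rangle}(\chi_\nu^+)^{-1}(c^\sharp).
\]
To see this, apply \eqref{eq:pairing-transport}:
\[
 \bigl(t^{\langle\nu,\nu\rangle}(\chi_\nu^+)^{-1}(c^\sharp),(\chi_\nu^-)^{-1}(c')\bigr)_{\phi,t}
 =t^{\langle\nu,\nu\rangle}t^{-\langle\nu,\nu\rangle}(c^\sharp,c')_{\phi,1}
 =\delta_{c,c'}.
\]
By the uniqueness in \eqref{eq:paired-dual-basis}, this identifies the dual basis. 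Basis-independence of \(\Theta_{t,\nu}\), noted after \eqref{eq:theta}, then gives
\[
 \Theta_{t,\nu}=t^{\langle\nu,\nu\rangle}\sum_{c}(\chi_\nu^-)^{-1}(c)\otimes(\chi_\nu^+)^{-1}(c^\sharp).
\]

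Second, apply this to \(n_\mu\otimes m_\lambda\) on the transported modules. In the first factor, \eqref{eq:chi-minus-action} with \(y=(\chi_\nu^-)^{-1}(c)\) gives \(y^{(t)}n_\mu=t^{-\langle\nu,\mu\rangle}c^{(1)}n_\mu\). In the second factor, \eqref{eq:chi-plus-action} with \(x=(\chi_\nu^+)^{-1}(c^\sharp)\) gives \(x^{(t)}m_\lambda=t^{\langle\lambda,\nu\rangle}(c^\sharp)^{(1)}m_\lambda\). Both scalars depend only on \(\nu,\mu,\lambda\) and not on \(c\), so they factor out of the sum. What remains is \(\sum_c c^{(1)}n_\mu\otimes(c^\sharp)^{(1)}m_\lambda=\Theta_{1,\nu}(n_\mu\otimes m_\lambda)\), and the total exponent is
\[
 \langle\nu,\nu\rangle-\langle\nu,\mu\rangle+\langle\lambda,\nu\rangle=S_\nu(\mu,\lambda).
\]

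The main point needing care is the bookkeeping of which factor each comparison map acts on. The tensor \(\Theta_{t,\nu}\) lies in \(U^-\otimes U^+\), so \(F\)-words act on \(N\) (weight \(\mu\)) and \(E\)-words act on \(M\) (weight \(\lambda\)). This matches the sign conventions of \eqref{eq:chi-plus-action}--\eqref{eq:chi-minus-action}. The other point is checking that the normalization of the paired-dual basis produces \(t^{+\langle\nu,\nu\rangle}\), compensating the \(t^{-\langle\nu,\nu\rangle}\) in \eqref{eq:pairing-transport}, rather than its inverse. The display in the first step confirms this. Everything else is the routine substitution above, and the case \(\nu=0\) is consistent since \(\Theta_{t,0}=\Theta_{1,0}=1\otimes1\) and \(S_0=0\).
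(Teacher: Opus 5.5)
Your proof is correct and follows the paper's argument. Both use \eqref{eq:pairing-transport} to show that the paired-dual basis is rescaled by $t^{\langle\nu,\nu\rangle}$, which gives $(\chi_\nu^-\otimes\chi_\nu^+)(\Theta_{t,\nu})=t^{\langle\nu,\nu\rangle}\Theta_{1,\nu}$ (equivalently, your displayed formula for $\Theta_{t,\nu}$), and both then collect $t^{-\langle\nu,\mu\rangle}$ and $t^{\langle\lambda,\nu\rangle}$ from \eqref{eq:chi-minus-action} and \eqref{eq:chi-plus-action}. The only difference is cosmetic: you start from a $t=1$ basis and pull it back through $(\chi_\nu^-)^{-1}$, whereas the paper starts from a $t$-basis and pushes it forward through $\chi_\nu^-$.
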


\begin{proof}
Choose a basis \(\mathcal B_{t,\nu}\) of
\(U_{v,t,-\nu}^-\), and use
\(\chi_\nu^-(\mathcal B_{t,\nu})\) as a basis of
\(U_{v,1,-\nu}^-\).
If \(b\in\mathcal B_{t,\nu}\) and \(b^\sharp\) is paired-dual to \(b\), then
\eqref{eq:pairing-transport} gives
\begin{equation}\label{eq:paired-basis-transport}
 \chi_\nu^+(b^\sharp)
 =t^{\langle\nu,\nu\rangle}
   \bigl(\chi_\nu^-(b)\bigr)^\sharp,
\end{equation}
where the sharp on the right denotes the paired-dual basis for the
one-parameter pairing.
Consequently,
\begin{equation}\label{eq:canonical-tensor-transport}
 (\chi_\nu^-\otimes\chi_\nu^+)(\Theta_{t,\nu})
 =t^{\langle\nu,\nu\rangle}\Theta_{1,\nu}.
\end{equation}
When \(\Theta_{t,\nu}\) acts on \(n_\mu\otimes m_\lambda\),
\eqref{eq:chi-minus-action} contributes
\(t^{-\langle\nu,\mu\rangle}\),
\eqref{eq:chi-plus-action} contributes
\(t^{\langle\lambda,\nu\rangle}\), and
\eqref{eq:canonical-tensor-transport} contributes
\(t^{\langle\nu,\nu\rangle}\).
Their product is \(t^{S_\nu(\mu,\lambda)}\), which proves
\eqref{eq:theta-transport}.
\end{proof}

\begin{theorem}[Checked \(R\)-matrix transport]
\label{thm:checked-R-transport}
For finite-dimensional \(L\)-weight \(U_{v,1}\)-modules \(M,N\), the checked
\(R\)-matrix and its inverse satisfy
\begin{align}
 \widehat R_t^{M,N}
 &=J_{N,M}^{-1}\widehat R_1^{M,N}J_{M,N},
 \label{eq:checked-R-transport}\\
 (\widehat R_t^{M,N})^{-1}
 &=J_{M,N}^{-1}(\widehat R_1^{M,N})^{-1}J_{N,M}.
 \label{eq:inverse-R-transport}
\end{align}
The source and target of \eqref{eq:checked-R-transport} are
\(\Phi_t(M)\otimes\Phi_t(N)\) and
\(\Phi_t(N)\otimes\Phi_t(M)\), respectively.
The source and target of \eqref{eq:inverse-R-transport} occur in the reverse
order.
\end{theorem}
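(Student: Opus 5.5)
The plan is to check \eqref{eq:checked-R-transport} directly on homogeneous tensors and one root-degree component at a time. Then \eqref{eq:inverse-R-transport} follows by inverting both sides. All maps involved are either weight-diagonal scalars (namely \(J\), \(\widetilde f_t\), \(\widetilde f_1\)), the flip \(P_{M,N}\), or a sum over \(\nu\in Q^+\) of components \(\Theta_{t,\nu}\) and \(\Theta_{1,\nu}\). So it suffices to apply both sides to \(m_\lambda\otimes n_\mu\) and compare the \(\nu\)-components. Each such component lands in \(N_{\mu-\nu}\otimes M_{\lambda+\nu}\). Only finitely many \(\nu\) contribute, so there is no convergence issue, as explained after \eqref{eq:root-degree-completion}.

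\emph{Left side.} Using \eqref{eq:checked-R-definition}, the flip produces \(n_\mu\otimes m_\lambda\). The toral operator \eqref{eq:toral-operator} then multiplies by
\[
 f_t(\mu,\lambda)=v^{-\mu\cdot\lambda}t^{\langle\mu,\lambda\rangle-\langle\lambda,\mu\rangle}.
\]
Next, Lemma~\ref{lem:theta-transport} rewrites \(\Theta_{t,\nu}(n_\mu\otimes m_\lambda)\) as \(t^{S_\nu(\mu,\lambda)}\Theta_{1,\nu}(n_\mu\otimes m_\lambda)\), where \(\Theta_{1,\nu}\) acts through the one-parameter structures. Hence the \(\nu\)-component of the left side is
\[
 v^{-\mu\cdot\lambda}\,
 t^{\langle\mu,\lambda\rangle-\langle\lambda,\mu\rangle+\langle\lambda,\nu\rangle-\langle\nu,\mu\rangle+\langle\nu,\nu\rangle}\,
 \Theta_{1,\nu}(n_\mu\otimes m_\lambda).
\]

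\emph{Right side.} By \eqref{eq:tensorator}, \(J_{M,N}\) contributes \(t^{\langle\mu,\lambda\rangle}\). Then \(\widehat R_1^{M,N}\) contributes \(f_1(\mu,\lambda)=v^{-\mu\cdot\lambda}\) together with \(\Theta_{1,\nu}\). Its output lies in \(N_{\mu-\nu}\otimes M_{\lambda+\nu}\), so the final \(J_{N,M}^{-1}\) contributes \(t^{-\langle\lambda+\nu,\mu-\nu\rangle}\). Expanding this last exponent by bilinearity gives
\[
 -\langle\lambda,\mu\rangle+\langle\lambda,\nu\rangle-\langle\nu,\mu\rangle+\langle\nu,\nu\rangle.
\]
Adding \(\langle\mu,\lambda\rangle\) gives exactly the left-hand exponent, so the two \(\nu\)-components agree. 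Summing over \(\nu\) proves \eqref{eq:checked-R-transport}.

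The one point needing care, and the main obstacle, is that the operator \(\widehat R_1^{M,N}\) on the right must be read on the underlying spaces of the original one-parameter modules. This is exactly the convention of Lemma~\ref{lem:theta-transport}, where \(\Theta_{1,\nu}\) acts through the one-parameter actions. So the comparison is purely a bookkeeping of the \(t\)-exponents recorded above, and it uses nothing beyond Lemma~\ref{lem:theta-transport} and the weight shifts of \(\Theta_\nu\). Finally, \(J_{M,N}\) and \(J_{N,M}\) are invertible by Proposition~\ref{prop:tensorator}, and both checked \(R\)-matrices are invertible by \eqref{eq:checked-R-inverse}. Inverting \eqref{eq:checked-R-transport} with \((ABC)^{-1}=C^{-1}B^{-1}A^{-1}\) therefore gives \eqref{eq:inverse-R-transport}, with source and target reversed.
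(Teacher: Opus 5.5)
Your proposal is correct and follows the paper's proof essentially verbatim. You apply both sides to $m_\lambda\otimes n_\mu$ and isolate the root-degree-$\nu$ term, take the factors $t^{\langle\mu,\lambda\rangle}$ and $t^{-\langle\lambda+\nu,\mu-\nu\rangle}$ from the two tensorators, and match exponents using Lemma~\ref{lem:theta-transport}; the paper merely phrases this as factoring the right-hand exponent into $t^{\langle\mu,\lambda\rangle-\langle\lambda,\mu\rangle}t^{S_\nu(\mu,\lambda)}$, and then obtains \eqref{eq:inverse-R-transport} by inversion exactly as you do.
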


\begin{proof}
Apply the right-hand side of \eqref{eq:checked-R-transport} to
\(m_\lambda\otimes n_\mu\) and retain the quasi-\(R\) term of root degree
\(\nu\).
The input tensor comparison contributes
\(t^{\langle\mu,\lambda\rangle}\).
After the flip, \(\Theta_{1,\nu}\) changes the two output weights to
\(\mu-\nu\) and \(\lambda+\nu\).
The inverse output comparison therefore contributes
\(t^{-\langle\lambda+\nu,\mu-\nu\rangle}\).
Their product factors as
\begin{align}
 t^{\langle\mu,\lambda\rangle-
    \langle\lambda+\nu,\mu-\nu\rangle}
 &=
 t^{\langle\mu,\lambda\rangle-\langle\lambda,\mu\rangle}
 t^{S_\nu(\mu,\lambda)}.
 \label{eq:R-factorization}
\end{align}
The first factor changes \(f_1(\mu,\lambda)\) into
\(f_t(\mu,\lambda)\) by \eqref{eq:toral-bicharacter}, and the second changes
\(\Theta_{1,\nu}\) into \(\Theta_{t,\nu}\) by
Lemma~\ref{lem:theta-transport}.
Summing over \(\nu\) proves \eqref{eq:checked-R-transport}.
Taking inverses and using the displayed source and target spaces proves
\eqref{eq:inverse-R-transport}.
\end{proof}

\subsection{Transport of algebraic duals and the four cup-cap maps}

% P3.05
Finite dimensionality is used here because the coevaluation maps in
\eqref{eq:coev}--\eqref{eq:coqtr} are finite sums over a basis.
Write $(-)^*$ for the operation of taking the algebraic dual with the action
in \eqref{eq:dual-action}.
Starting from a one-parameter module $M$, the two possible orders of dualizing
and transporting are
\begin{equation}\label{eq:two-dual-routes}
\begin{array}{ccccc}
 M&\xrightarrow{\ (-)^*\ }&M^*&\xrightarrow{\ \Phi_t\ }&\Phi_t(M^*),\\[3pt]
 M&\xrightarrow{\ \Phi_t\ }&\Phi_t(M)&\xrightarrow{\ (-)^*\ }&\Phi_t(M)^*.
\end{array}
\end{equation}
Thus the two endpoints \(\Phi_t(M^*)\) and \(\Phi_t(M)^*\) are a priori
different \(U_{v,t}\)-modules.
For a homogeneous functional \(\varphi_{-\lambda}\in(M^*)_{-\lambda}\),
define
\begin{equation}\label{eq:dual-identification}
 D_M:\Phi_t(M^*)\longrightarrow\Phi_t(M)^*,
 \qquad
 D_M(\varphi_{-\lambda})
 =t^{-\langle\lambda,\lambda\rangle}\varphi_{-\lambda}.
\end{equation}
Thus \(D_M\) is a weight-diagonal linear isomorphism; its subscript records
the original module whose two dual constructions are being compared.
Its purpose is precisely to compare the two endpoints of
\eqref{eq:two-dual-routes}.
All compositions of maps below are read from right to left.

\begin{proposition}[Dual comparison]\label{prop:dual-comparison}
The map \(D_M\) is a \(U_{v,t}\)-module isomorphism.
Under this identification, the four maps
\eqref{eq:ev}--\eqref{eq:coqtr} obey
\begin{align}
 \makebox[.43\linewidth][l]{$\displaystyle\ev_t(D_M\otimes\id_M)$}
 &=\ev_1J_{M^*,M},
 \label{eq:dual-evaluation-transport}\\
 \makebox[.43\linewidth][l]{$\displaystyle\qtr_t(\id_M\otimes D_M)$}
 &=\qtr_1J_{M,M^*},
 \label{eq:qtr-transport}\\
 \makebox[.43\linewidth][l]{$\displaystyle\coev_t$}
 &=(D_M\otimes\id_M)J_{M^*,M}^{-1}\coev_1,
 \label{eq:coev-transport}\\
 \makebox[.43\linewidth][l]{$\displaystyle\coqtr_t$}
 &=(\id_M\otimes D_M)J_{M,M^*}^{-1}\coqtr_1.
 \label{eq:coqtr-transport}
\end{align}
Every subscript \(1\) map in these formulas is extended from
\(\Q(v^{1/D})\) to \(\K_D\).
\end{proposition}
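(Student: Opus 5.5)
The plan is to reduce every assertion to a comparison of scalar factors on homogeneous vectors. All maps involved ($D_M$, $J$, $\ev$, $\qtr$, $\coev$, $\coqtr$) are weight-diagonal, or are built from a homogeneous basis and its dual basis. The transported actions \eqref{eq:transport-torus}--\eqref{eq:transport-roots} differ from the one-parameter ones only by weight-dependent powers of $t$. So each identity reduces to an equality of $t$-exponents that are bilinear expressions in weights, in the same spirit as Proposition~\ref{prop:tensorator}.

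\emph{Step 1: $D_M$ is a module map.} Both $\Phi_t(M^*)$ and $\Phi_t(M)^*$ have the same underlying space $M^*$ and the same weight decomposition \eqref{eq:dual-weight-space}. Both are type-$1$ for $U_{v,t}$: the first by Theorem~\ref{thm:module-transport}, the second because the dual of an $L$-weight module is again one under \eqref{eq:dual-action}. Hence the toral generators act by the same eigenvalues on $(M^*)_{-\lambda}$, and the diagonal map $D_M$ commutes with them.

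For $E_i$, take $\varphi_{-\lambda}$ and compare the two sides on the target weight $-\lambda+i$.
\begin{itemize}
\item In $\Phi_t(M^*)$, the vector $E_i^{(t)}\varphi_{-\lambda}$ is $t^{-\langle\lambda,i\rangle}$ times the one-parameter dual action.
\item In $\Phi_t(M)^*$, evaluate $E_i\varphi_{-\lambda}$ on $m_{\lambda-i}$ using $S(E_i)=-K_i^{-1}E_i$ with the transported actions. This gives the factor $t^{\langle\lambda-i,i\rangle-\langle\lambda,i\rangle+\langle i,\lambda\rangle}=t^{\langle i,\lambda\rangle-d_i}$.
\end{itemize}
Including the factors of $D_M$ on the two weights $-\lambda$ and $-\lambda+i$, the required identity becomes
\[
 -\langle\lambda-i,\lambda-i\rangle-\langle\lambda,i\rangle
 =-\langle\lambda,\lambda\rangle+\langle i,\lambda\rangle-d_i ,
\]
which holds by bilinearity and $\langle i,i\rangle=d_i$. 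The $F_i$ case is the same computation with $S(F_i)=-F_i(K_i')^{-1}$ and the $F$-factor $t^{d_i-\langle i,\lambda\rangle}$. Invertibility of $D_M$ is clear.

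\emph{Step 2: the four identities.} Each is checked on $\varphi_{-\lambda}\otimes m_\lambda$, $m_\lambda\otimes\varphi_{-\lambda}$, or on the basis sums. In every case the only contributions are $t^{\pm\langle\lambda,\lambda\rangle}$ from $D_M$ and $t^{\langle-\lambda,\lambda\rangle}$ or $t^{\langle\lambda,-\lambda\rangle}$ from $J$, and these are equal. The $v$-factors $v_{\pm\lambda}^2$ in \eqref{eq:qtr}--\eqref{eq:coev} do not involve $t$, and vanishing on mismatched weights is preserved.
\begin{itemize}
\item For \eqref{eq:dual-evaluation-transport}, both sides send $\varphi_{-\lambda}\otimes m_\lambda$ to $t^{-\langle\lambda,\lambda\rangle}\varphi(m_\lambda)$; \eqref{eq:qtr-transport} works the same way.
\item For \eqref{eq:coev-transport} and \eqref{eq:coqtr-transport}, note that the ordinary dual basis $\{b^*\}$ is the same set of vectors in $\Phi_t(M)^*$. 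Then $J^{-1}$ contributes $t^{\langle\lambda,\lambda\rangle}$ on the summand $b^*\otimes b$, and $D_M$ cancels it. So $\coev_1$ and $\coqtr_1$ are carried to exactly the basis sums defining $\coev_t$ and $\coqtr_t$.
\end{itemize}

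The main obstacle is Step~1: correctly tracking the antipode under transport, with the toral inverse evaluated on the already-raised weight. Sign and ordering mistakes are likely there. The cup-cap identities are then routine exponent bookkeeping.
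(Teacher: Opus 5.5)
Your proposal is correct and follows essentially the same route as the paper. You compute the transported dual action on $\Phi_t(M)^*$ through the antipode, getting the factors $t^{\langle i,\lambda\rangle-d_i}$ and $t^{-\langle\lambda,i\rangle}$; you check that $D_M$ intertwines $E_i,F_i$ by bilinear exponent bookkeeping, with the toral actions agreeing by weight; and you verify the four cup--cap identities by matching or cancelling $t^{-\langle\lambda,\lambda\rangle}$ from $D_M$ against $J^{\pm1}$. One wording slip: $D_M$ always contributes $t^{-\langle\lambda,\lambda\rangle}$, not $t^{\pm\langle\lambda,\lambda\rangle}$; the sign reversal in the coevaluation cases comes from $J^{-1}$, which your bullet points already handle correctly.
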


The first identity compares the route ``apply $D_M\otimes\id_M$, then
$\ev_t$'' with the route ``apply $J_{M^*,M}$, then $\ev_1$''.
The second identity makes the analogous comparison for $\qtr$.
In the third and fourth identities, the right-hand route begins with the
one-parameter coevaluation, then applies the inverse tensor comparison, and
finally applies $D_M$ to the dual factor.

\begin{proof}
\emph{Step 1: compute the dual action after transport.}
The antipode formulas \eqref{eq:antipode-roots} imply that the root
generators act on \(\Phi_t(M)^*\) by
\begin{equation}\label{eq:transported-dual-actions}
 E_i^{(t)}\varphi_{-\lambda}
 =t^{-d_i+\langle i,\lambda\rangle}
  E_i^{(1)}\varphi_{-\lambda},
 \qquad
 F_i^{(t)}\varphi_{-\lambda}
 =t^{-\langle\lambda,i\rangle}
  F_i^{(1)}\varphi_{-\lambda}.
\end{equation}
For example, \(E_i^{(t)}\varphi_{-\lambda}\) is supported on weight
\(\lambda-i\); inserting \(S(E_i)=-K_i^{-1}E_i\) and
\eqref{eq:transport-roots} into \eqref{eq:dual-action} gives the first
factor in \eqref{eq:transported-dual-actions}.
The calculation for \(F_i\) uses
\(S(F_i)=-F_i(K_i')^{-1}\).

\emph{Step 2: prove that $D_M$ intertwines the generators.}
On the source \(\Phi_t(M^*)\), formula \eqref{eq:transport-roots} uses the
input weight \(-\lambda\).
Since \(E_i^{(1)}\varphi_{-\lambda}\) has weight \(-(\lambda-i)\), one gets
\begin{align}
 D_M(E_i^{(t)}\varphi_{-\lambda})
 &=
 t^{-\langle\lambda,\lambda\rangle-d_i+\langle i,\lambda\rangle}
 E_i^{(1)}\varphi_{-\lambda}
 =
 E_i^{(t)}D_M(\varphi_{-\lambda}),
 \label{eq:dual-E-check}\\
 D_M(F_i^{(t)}\varphi_{-\lambda})
 &=
 t^{-\langle\lambda,\lambda\rangle-\langle\lambda,i\rangle}
 F_i^{(1)}\varphi_{-\lambda}
 =
 F_i^{(t)}D_M(\varphi_{-\lambda}).
 \label{eq:dual-F-check}
\end{align}
The toral actions also agree because both sides have weight \(-\lambda\).
This proves that \(D_M\) is a module isomorphism.

\emph{Step 3: compare the four cup-cap maps.}
On \(\varphi_{-\lambda}\otimes m_\lambda\), the tensorator
\(J_{M^*,M}\) contributes
\(t^{\langle\lambda,-\lambda\rangle}
=t^{-\langle\lambda,\lambda\rangle}\), exactly the factor contributed by
\(D_M\); this proves \eqref{eq:dual-evaluation-transport}.
The same calculation on
\(m_\lambda\otimes\varphi_{-\lambda}\) proves
\eqref{eq:qtr-transport}, because the factor
\(v_{-\lambda}^2\) in \(\qtr\) is independent of \(t\).
For each homogeneous basis vector of weight \(\lambda\),
\(J^{-1}\) contributes \(t^{\langle\lambda,\lambda\rangle}\) to either
coevaluation formula and \(D_M\) contributes its reciprocal.
Put $q_\lambda=\langle\lambda,\lambda\rangle$.
The four scalar checks can be summarized without suppressing any factor as
\begin{equation}\label{eq:dual-scalar-table}
\begin{array}{c|c|c|c}
 \text{map}&\text{factor from }D_M&\text{factor from }J^{\pm1}&\text{comparison}\\ \hline
 \ev&t^{-q_\lambda}&t^{-q_\lambda}&\text{equal}\\
 \qtr&t^{-q_\lambda}&t^{-q_\lambda}&\text{equal}\\
 \coev&t^{-q_\lambda}&t^{q_\lambda}&1\\
 \coqtr&t^{-q_\lambda}&t^{q_\lambda}&1
\end{array}
\end{equation}
In the last two rows the factors multiply, so they cancel term by term.
This proves \eqref{eq:coev-transport}--\eqref{eq:coqtr-transport} and
completes the proof.
\end{proof}

\subsection{A general ordinary-tangle transport theorem}

% P3.06
We now formulate the formal principle justified by the preceding
computations.
To keep the roles of the hypotheses separate, we introduce four groups of
data in order: a change of coefficients, two tangle functors, comparisons of
their boundary objects, and scalar defects for elementary slices.
The first group is the coefficient change.
Let \(R_0,R_1\) be commutative rings with identity, let
\(\sigma:R_0\to R_1\) be a ring homomorphism, and let
\(\sigma_*\) denote extension of coefficients along \(\sigma\).
A ring homomorphism preserves addition, multiplication, and the identity
element; thus \(\sigma(r+r')=\sigma(r)+\sigma(r')\),
\(\sigma(rr')=\sigma(r)\sigma(r')\), and \(\sigma(1)=1\).
Explicitly, for an \(R_0\)-module \(M\) and an \(R_0\)-linear map
\(f:M\to N\), define
\begin{equation}\label{eq:scalar-extension-definition}
 \sigma_*M=R_1\otimes_{R_0,\sigma}M,
 \qquad
 \sigma_*f=\id_{R_1}\otimes f:\sigma_*M\longrightarrow\sigma_*N,
\end{equation}
where the subscript \((R_0,\sigma)\) means that \(R_1\) is regarded as a
right \(R_0\)-module through \(a\cdot r=a\sigma(r)\).
The second group consists of the two tangle functors to be compared.
For \(a\in\{0,1\}\), let
\begin{equation}\label{eq:tangle-functors}
 \mathcal F_a:\mathsf{OTa}\longrightarrow\mathcal C_a
\end{equation}
be a strict monoidal functor from the ordinary oriented-tangle category to an
\(R_a\)-linear monoidal category.
The term \(R_a\)-linear means that every morphism space is an \(R_a\)-module
and that composition and tensor product of morphisms are bilinear over
\(R_a\).
Assume that the tensor unit of \(\mathcal C_a\) is \(R_a\) and that each of
its endomorphisms is multiplication by a scalar in \(R_a\).
Thus a closed link \(\mathcal L:\varnothing\to\varnothing\) defines a unique
scalar \(I_a(\mathcal L)\) by
\begin{equation}\label{eq:abstract-closed-invariant}
 \mathcal F_a(\mathcal L)
 =I_a(\mathcal L)\id_{R_a}.
\end{equation}

% P3.07
Assume that, after extension along \(\sigma\), the two target categories are
realized in a common \(R_1\)-linear category, so that their objects and
morphisms can be compared by maps in that category.
The third group consists of the comparison maps at the signed boundary
words.
For every sign word \(\mathbf X\), suppose there is an invertible boundary
comparison
\begin{equation}\label{eq:boundary-comparison}
 B_{\mathbf X}:\mathcal F_1(\mathbf X)
 \longrightarrow\sigma_*\mathcal F_0(\mathbf X),
 \qquad B_\varnothing=\id_{R_1}.
\end{equation}
The term boundary refers to the signed endpoints represented by
\(\mathbf X\).
Coherence means that the comparison for a concatenated word is obtained by
iterating the same two-factor comparison and is independent of
parenthesization.

The fourth group consists of the scalar discrepancy allowed at each local
piece of a tangle.
An \emph{elementary slice} is a morphism obtained by placing one of the six
nonidentity generators between any finite numbers of identity strands; thus
it has the form
\(E=\id_{\mathbf U}\otimes G\otimes\id_{\mathbf V}:
\mathbf X\to\mathbf Y\) for sign words \(\mathbf U,\mathbf V\) and a basic
generator \(G\).
For an elementary slice $E:\mathbf X\to\mathbf Y$, the comparison without a
scalar follows the three-arrow route
\begin{equation}\label{eq:local-transport-route}
 \mathcal F_1(\mathbf X)
 \xrightarrow{\ B_{\mathbf X}\ }
 \sigma_*\mathcal F_0(\mathbf X)
 \xrightarrow{\ \sigma_*\mathcal F_0(E)\ }
 \sigma_*\mathcal F_0(\mathbf Y)
 \xrightarrow{\ B_{\mathbf Y}^{-1}\ }
 \mathcal F_1(\mathbf Y).
\end{equation}
The composite in \eqref{eq:local-transport-route} is
$B_{\mathbf Y}^{-1}\sigma_*(\mathcal F_0(E))B_{\mathbf X}$, and
$\alpha(E)$ measures the permitted scalar difference between this composite
and $\mathcal F_1(E)$.
Suppose every elementary slice satisfies
\begin{equation}\label{eq:local-scalar-transport}
 \mathcal F_1(E)
 =
 \alpha(E)B_{\mathbf Y}^{-1}
 \sigma_*\!\left(\mathcal F_0(E)\right)B_{\mathbf X},
 \qquad \alpha(E)\in R_1^\times.
\end{equation}
Here \(R_1^\times\) is the multiplicative group of units of \(R_1\), and
\(\alpha(E)\) is called the local scalar defect.
Assume that the defects respect the defining relations of
\(\mathsf{OTa}\), so their products give a decomposition-independent scalar
\(\alpha(T)\) for every tangle \(T\).

\begin{theorem}[Ordinary-tangle parameter transport]
\label{thm:scalar-transport}
Under the preceding hypotheses, every ordinary oriented tangle
\(T:\mathbf X\to\mathbf Y\) satisfies
\begin{equation}\label{eq:global-scalar-transport}
 \mathcal F_1(T)
 =
 \alpha(T)B_{\mathbf Y}^{-1}
 \sigma_*\!\left(\mathcal F_0(T)\right)B_{\mathbf X}.
\end{equation}
For a closed oriented link,
\begin{equation}\label{eq:closed-scalar-transport}
 I_1(\mathcal L)
 =\alpha(\mathcal L)\sigma\!\left(I_0(\mathcal L)\right).
\end{equation}
If every local defect equals \(1\), the transport is called strict and the
closed invariants are exactly equal after coefficient extension.
\end{theorem}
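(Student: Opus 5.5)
The proof decomposes every tangle into elementary slices and lets the boundary comparisons telescope. By Turaev's presentation \cite[Theorem~3.2]{turaev1990}, every morphism \(T:\mathbf X\to\mathbf Y\) of \(\mathsf{OTa}\) can be written as a vertical composite
\[
 T=E_k\circ E_{k-1}\circ\cdots\circ E_1,
 \qquad E_j:\mathbf X_{j-1}\to\mathbf X_j,\quad \mathbf X_0=\mathbf X,\ \mathbf X_k=\mathbf Y,
\]
where each \(E_j\) is an elementary slice, or an identity when \(k=0\). For an identity morphism, \eqref{eq:global-scalar-transport} holds with \(\alpha=1\), since \(B_{\mathbf X}^{-1}B_{\mathbf X}=\id\) and \(\sigma_*\) preserves identities. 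I will take \(\alpha(T)=\prod_j\alpha(E_j)\); by hypothesis this product does not depend on the chosen decomposition.

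The main step is an induction on \(k\). Strict monoidality gives \(\mathcal F_1(T)=\mathcal F_1(E_k)\circ\cdots\circ\mathcal F_1(E_1)\). Substituting \eqref{eq:local-scalar-transport} for each factor, adjacent boundary terms \(B_{\mathbf X_j}B_{\mathbf X_j}^{-1}\) cancel at every interior word \(\mathbf X_j\). The scalars \(\alpha(E_j)\) can be pulled outside because composition in the \(R_1\)-linear category is bilinear. Functoriality of extension of scalars, \(\sigma_*(g\circ f)=\sigma_*g\circ\sigma_*f\), then recombines \(\sigma_*\mathcal F_0(E_k)\circ\cdots\circ\sigma_*\mathcal F_0(E_1)\) into \(\sigma_*\mathcal F_0(T)\), again using that \(\mathcal F_0\) is a functor. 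Only the outer comparisons \(B_{\mathbf Y}^{-1}\) and \(B_{\mathbf X}\) remain, which proves \eqref{eq:global-scalar-transport}.

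For a closed link \(\mathcal L:\varnothing\to\varnothing\), I will use \(B_\varnothing=\id_{R_1}\). Then \(\mathcal F_1(\mathcal L)=\alpha(\mathcal L)\,\sigma_*(I_0(\mathcal L)\id_{R_0})\). Under the identification \(R_1\otimes_{R_0,\sigma}R_0\cong R_1\), the map \(\sigma_*(r\,\id_{R_0})\) is multiplication by \(\sigma(r)\). Comparing with \eqref{eq:abstract-closed-invariant} gives \eqref{eq:closed-scalar-transport}, and the strict case is the specialization \(\alpha\equiv1\).

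The main obstacle is not the telescoping but well-definedness: \(\mathcal F_1(T)\) is independent of the decomposition because \(\mathcal F_1\) is a functor, yet the right-hand side is a priori computed from a particular decomposition. Two facts resolve this. The standing hypothesis that the defects respect the relations of \(\mathsf{OTa}\) makes \(\alpha(T)\) decomposition-independent, and the remaining factor \(B_{\mathbf Y}^{-1}\sigma_*(\mathcal F_0(T))B_{\mathbf X}\) depends only on \(T\) because \(\mathcal F_0\) is a functor. I will also say explicitly that the common \(R_1\)-linear realization is what allows \(B\), \(\sigma_*\mathcal F_0\) and \(\mathcal F_1\) to be composed at all, so every composite above is taken inside that single category.
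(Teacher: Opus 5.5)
Your proposal is correct and follows essentially the same route as the paper. Both cut the tangle into elementary slices, cancel the interior comparisons $B_{\mathbf X_j}B_{\mathbf X_j}^{-1}$, multiply the local defects (using the relation-compatibility hypothesis for decomposition independence), and then specialize to empty boundary words with $B_\varnothing=\id_{R_1}$; your explicit use of bilinearity, functoriality of $\sigma_*$, and the identification $R_1\otimes_{R_0,\sigma}R_0\cong R_1$ only spells out steps the paper leaves implicit.
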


\begin{proof}
By the presentation recalled in Subsection~2.5, an oriented-tangle diagram
can be cut by horizontal levels so that each layer is an elementary slice or
an identity morphism.
Formula \eqref{eq:local-scalar-transport} holds for each elementary slice by
hypothesis and for an identity morphism with scalar defect one.
We first display this cancellation for two consecutive slices
$E_1:\mathbf X_0\to\mathbf X_1$ and
$E_2:\mathbf X_1\to\mathbf X_2$:
\begin{align}
 \mathcal F_1(E_2\circ E_1)
 &=\mathcal F_1(E_2)\mathcal F_1(E_1)\notag\\
 &=\alpha(E_2)\alpha(E_1)B_{\mathbf X_2}^{-1}
   \sigma_*\mathcal F_0(E_2)
   \underbrace{B_{\mathbf X_1}B_{\mathbf X_1}^{-1}}_{\id}
   \sigma_*\mathcal F_0(E_1)B_{\mathbf X_0}\notag\\
 &=\alpha(E_2)\alpha(E_1)B_{\mathbf X_2}^{-1}
   \sigma_*\mathcal F_0(E_2\circ E_1)B_{\mathbf X_0}.
 \label{eq:two-slice-cancellation}
\end{align}
Thus the comparison maps at the internal boundary $\mathbf X_1$ cancel
exactly.
The local defects multiply, and the relation-compatibility hypothesis makes
their product independent of the chosen cutting and diagram.
Repeating the two-slice calculation gives an induction on the number of
layers and proves \eqref{eq:global-scalar-transport}.
For a closed link, both boundary words are empty and
\(B_\varnothing=\id_{R_1}\); applying
\eqref{eq:abstract-closed-invariant} gives
\eqref{eq:closed-scalar-transport}.
\end{proof}

\subsection{Strict transport of the Fan--Ma--Xing functor}

% P3.08
Let \(M_1\) be a finite-dimensional integrable type-\(1\) simple
highest-weight \(U_{v,1}\)-module of highest weight \(\Lambda\), and put
\(M_t=\Phi_t(M_1)\).
For the one-letter sign words, define
\begin{equation}\label{eq:one-letter-boundary-comparison}
 B_+=\id_{M_t}:M_t\to\Phi_t(M_1),
 \qquad
 B_-=D_{M_1}^{-1}:M_t^*\to\Phi_t(M_1^*).
\end{equation}
For a concatenation \(\mathbf X\mathbf Y\), define recursively
\begin{equation}\label{eq:word-boundary-comparison}
 B_{\mathbf X\mathbf Y}
 =
 J_{\mathcal T_1(\mathbf X),\mathcal T_1(\mathbf Y)}
 (B_{\mathbf X}\otimes B_{\mathbf Y}),
 \qquad B_\varnothing=\id_{\K_D}.
\end{equation}
Proposition~\ref{prop:tensorator} makes this definition independent of
parenthesization.
The maps \(B_{\mathbf X}\) compare the actual tensor products assigned by
\(\mathcal T_t\) with the transport of those assigned by \(\mathcal T_1\).

The curl-normalization scalar is independent of \(t\), because
\begin{equation}\label{eq:curl-independent}
 f_t(\Lambda,\Lambda)=v^{-\Lambda\cdot\Lambda},
 \qquad a_M(t)=a_M(1).
\end{equation}
Indeed, the antisymmetric exponent
\(\langle\Lambda,\Lambda\rangle-\langle\Lambda,\Lambda\rangle\) is zero.
Consequently, Theorem~\ref{thm:checked-R-transport} transports both normalized
crossings in \eqref{eq:curl} with no scalar defect.
Proposition~\ref{prop:dual-comparison} does the same for the four cups and
caps.
Thus
\begin{equation}\label{eq:zero-defect}
 \alpha(E)=1
 \qquad\text{for every elementary slice }E.
\end{equation}

\begin{theorem}[Equality of the two link invariants]
\label{thm:link-equality}
With \(M_1\) and \(M_t\) as above, every ordinary oriented tangle
\(T:\mathbf X\to\mathbf Y\) satisfies
\begin{equation}\label{eq:full-tangle-transport}
 \mathcal T_t(T)
 =B_{\mathbf Y}^{-1}\mathcal T_1(T)B_{\mathbf X},
\end{equation}
where the one-parameter map is understood after extension to \(\K_D\).
In particular, every oriented link \(\mathcal L\) satisfies
\begin{equation}\label{eq:link-equality}
 \boxed{I_{v,t}^{M_t}(\mathcal L)
 =I_{v,1}^{M_1}(\mathcal L).}
\end{equation}
Therefore, for any two oriented links \(\mathcal L_1,\mathcal L_2\),
\begin{equation}\label{eq:fineness-equivalence}
 I_{v,t}^{M_t}(\mathcal L_1)=I_{v,t}^{M_t}(\mathcal L_2)
 \quad\Longleftrightarrow\quad
 I_{v,1}^{M_1}(\mathcal L_1)=I_{v,1}^{M_1}(\mathcal L_2).
\end{equation}
\end{theorem}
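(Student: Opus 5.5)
The plan is to apply Theorem~\ref{thm:scalar-transport} with $R_0=\Q(v^{1/D})$, $R_1=\K_D$, $\sigma$ the inclusion, $\mathcal F_0=\mathcal T_1$ and $\mathcal F_1=\mathcal T_t$. The boundary comparisons are the maps $B_{\mathbf X}$ of \eqref{eq:one-letter-boundary-comparison}--\eqref{eq:word-boundary-comparison}. Three things must be checked. First, each $B_{\mathbf X}$ is an invertible $U_{v,t}$-module map $\mathcal T_t(\mathbf X)\to\Phi_t(\mathcal T_1(\mathbf X))$. This follows by induction on word length from Proposition~\ref{prop:dual-comparison} for $B_-$, the identity for $B_+$, and Proposition~\ref{prop:tensorator}, which supplies both the tensorator and the independence of parenthesization. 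Second, \eqref{eq:local-scalar-transport} must hold with $\alpha(E)=1$ for every elementary slice. Third, once all defects equal $1$, the relation-compatibility hypothesis on the defects holds trivially.

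For the local step I first treat the bare generators $G$. For $X_\pm$ I combine Theorem~\ref{thm:checked-R-transport} (with $M=N=M_1$, and $B_{++}=J_{M_1,M_1}$) with \eqref{eq:curl-independent}. Since $a_M(t)=a_M(1)$, both normalized crossings transport with no defect. For the four cups and caps I rewrite Proposition~\ref{prop:dual-comparison} in terms of the $B$'s:
\[
B_{-+}=J_{M_1^*,M_1}(D_{M_1}^{-1}\otimes\id),\qquad \ev_t=\ev_1B_{-+}.
\]
The remaining three identities are handled in the same way, using $B_\varnothing=\id$.

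The step I expect to be the main obstacle is passing from $G$ to a slice $\id_{\mathbf U}\otimes G\otimes\id_{\mathbf V}$. The difficulty is that $B_{\mathbf U\mathbf X\mathbf V}$ is not simply $B_{\mathbf U}\otimes B_{\mathbf X}\otimes B_{\mathbf V}$; it carries the extra tensorator factors $J_{\mathcal T_1(\mathbf U),\mathcal T_1(\mathbf X)}$ and related ones. To handle this, I expand with \eqref{eq:word-boundary-comparison}:
\[
B_{\mathbf U\mathbf X\mathbf V}=J_{U,X,V}\,(B_{\mathbf U}\otimes B_{\mathbf X}\otimes B_{\mathbf V}),
\]
where $J_{U,X,V}$ is the composite tensorator on $\Phi_t(U)\otimes\Phi_t(X)\otimes\Phi_t(V)$. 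The key point is naturality of $J$. Since $J$ is weight-diagonal with factor $t^{\langle\mu,\lambda\rangle}$ depending only on weights, and $\mathcal T_1(G)$ is a $U_{v,1}$-module map, it preserves total weight. Hence
\[
J_{U,Y,V}\bigl(\id\otimes\Phi_t(\mathcal T_1(G))\otimes\id\bigr)=\Phi_t\bigl(\id\otimes\mathcal T_1(G)\otimes\id\bigr)J_{U,X,V}.
\]
I would verify this directly on homogeneous tensors $u_\alpha\otimes x\otimes w_\beta$, where $x$ runs over vectors of total weight $\xi$. The scalar is $t^{\langle\xi,\alpha\rangle+\langle\beta,\alpha\rangle+\langle\beta,\xi\rangle}$ by \eqref{eq:triple-tensorator-factor}, and this expression depends only on the total weight $\xi$, which $G$ preserves. (The cups and caps map between total weight $0$ and the unit, so they are covered as well.) With naturality in hand, the bare-generator identities give the slice identities.

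Theorem~\ref{thm:scalar-transport} then yields \eqref{eq:full-tangle-transport}. For a link, $\mathbf X=\mathbf Y=\varnothing$ and $B_\varnothing=\id$, so \eqref{eq:link-equality} follows. Finally, \eqref{eq:fineness-equivalence} is immediate from \eqref{eq:link-equality} applied to $\mathcal L_1$ and $\mathcal L_2$.
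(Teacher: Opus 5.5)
Your proposal is correct and follows the paper's proof. Both arguments apply Theorem~\ref{thm:scalar-transport} with every defect equal to $1$, using Theorem~\ref{thm:checked-R-transport} together with \eqref{eq:curl-independent} for the two crossings and Proposition~\ref{prop:dual-comparison} for the four cups and caps. Your one addition is the naturality argument for the composite tensorator, which justifies passing from a bare generator $G$ to a slice $\id_{\mathbf U}\otimes G\otimes\id_{\mathbf V}$. The paper's proof leaves this step implicit. Your observation that the factor in \eqref{eq:triple-tensorator-factor} depends only on the total weight of the middle factor is what makes the step work.
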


\begin{proof}
Equations \eqref{eq:checked-R-transport},
\eqref{eq:inverse-R-transport}, and
\eqref{eq:dual-evaluation-transport}--\eqref{eq:coqtr-transport}, together
with \eqref{eq:curl-independent}, verify
\eqref{eq:local-scalar-transport} with the boundary maps
\eqref{eq:one-letter-boundary-comparison}--\eqref{eq:word-boundary-comparison}
and the zero defects \eqref{eq:zero-defect}.
Theorem~\ref{thm:scalar-transport} gives
\eqref{eq:full-tangle-transport}.
For a closed link, \(\mathbf X=\mathbf Y=\varnothing\), so both boundary maps
are the identity and \eqref{eq:link-equality} follows.
Applying this equality separately to \(\mathcal L_1\) and
\(\mathcal L_2\) proves \eqref{eq:fineness-equivalence}.
\end{proof}

% P3.09
The last equivalence is an equality of functions, not merely a comparison of
selected matrix entries.
It also explains the role of finite dimensionality: besides ensuring that
the quasi-\(R\) action and coevaluation sums are finite on each input, it
makes the closed evaluation an ordinary finite-dimensional trace.
No assertion is made here for affine type or for modules outside the domain
on which the Fan--Ma--Xing tangle functor is defined.

\subsection{Direct verification on a braid closure}

% P3.10
We give a second, completely explicit verification of the cancellation for
closed links.
For an integer \(m\geq1\), define the iterated tensor comparison

\begin{equation}\label{eq:iterated-tensorator}
 \begin{split}
 J_m:\;&M_t^{\otimes_{\Delta_t}m}
 \longrightarrow \Phi_t(M_1^{\otimes_{\Delta_1}m}),\\
 &J_m(m_{\lambda_1}\otimes\cdots\otimes m_{\lambda_m})
 =t^{\sum_{1\leq a<b\leq m}\langle\lambda_b,\lambda_a\rangle}
  m_{\lambda_1}\otimes\cdots\otimes m_{\lambda_m}.
 \end{split}
\end{equation}
Here every \(m_{\lambda_a}\) is a homogeneous vector of weight
\(\lambda_a\), and the superscripts on the tensor products indicate the
coproduct used in their module structures.
Formula \eqref{eq:iterated-tensorator} follows by applying
\eqref{eq:tensorator} recursively from left to right: adjoining the last
factor contributes
\(t^{\langle\lambda_m,\lambda_1+\cdots+\lambda_{m-1}\rangle}\).
The coherence identity \eqref{eq:tensorator-coherence} shows that the same
map is obtained with any other parenthesization.

% P3.11
The braid group \(B_m\) is the group generated by
\(\sigma_1,\ldots,\sigma_{m-1}\), subject to

\begin{equation}\label{eq:braid-group-relations}
 \sigma_i\sigma_{i+1}\sigma_i
 =\sigma_{i+1}\sigma_i\sigma_{i+1},
 \qquad
 \sigma_i\sigma_j=\sigma_j\sigma_i\quad(|i-j|>1).
\end{equation}
The generator \(\sigma_i\) is the positive crossing of the \(i\)th and
\((i+1)\)st strands, and \(\sigma_i^{-1}\) is its negative crossing.
Let

\begin{equation}\label{eq:braid-representations}
 \rho_t:B_m\longrightarrow\operatorname{GL}(M_t^{\otimes m})
\end{equation}
be the representation that assigns \(C_t^+\) to \(\sigma_i\) and \(C_t^-\)
to \(\sigma_i^{-1}\) in tensor positions \(i,i+1\), with identity maps in
all other positions; define \(\rho_1\) analogously.
The notation \(\operatorname{GL}(W)\) means the group of invertible linear
endomorphisms of the vector space \(W\).
The braid relations hold because the normalized checked \(R\)-matrices
satisfy the Yang--Baxter relation and \(C_t^-=(C_t^+)^{-1}\).
Equations \eqref{eq:checked-R-transport},
\eqref{eq:inverse-R-transport}, and \eqref{eq:curl-independent} give the
conjugacy identity

\begin{equation}\label{eq:braid-conjugacy}
 \rho_t(\beta)=J_m^{-1}\rho_1(\beta)J_m
 \qquad(\beta\in B_m).
\end{equation}

Indeed, the identity first holds for every \(\sigma_i^{\pm1}\); multiplying
the identities for consecutive letters in a braid word cancels the adjacent
factors \(J_mJ_m^{-1}\), proving it for the represented braid \(\beta\).

% P3.12
Define the \emph{closure operator} \(G_M\in\End(M_1)\) by

\begin{equation}\label{eq:closure-operator}
 G_Mm_\lambda=v_{-\lambda}^{2}m_\lambda
 \qquad(m_\lambda\in(M_1)_\lambda).
\end{equation}
This name records its role in the closure calculation: the coefficient
\(v_{-\lambda}^{2}\) is exactly the coefficient appearing in the map
\(\qtr\) in \eqref{eq:qtr}.
It is not an additional generator of the tangle category.
If \(A\in\End(M_1^{\otimes m})\), inserting \(m\) copies of
\(\coqtr\) at the bottom and \(m\) copies of \(\qtr\) at the top gives

\begin{equation}\label{eq:closure-trace}
 \operatorname{cl}(A)
 =\Tr_{M_1^{\otimes m}}\!\left(G_M^{\otimes m}A\right).
\end{equation}
Here \(\operatorname{cl}(A)\) denotes the scalar obtained by closing the
input and output of \(A\), and \(\Tr_W(A)\) is the ordinary matrix trace of
an endomorphism \(A\) of a finite-dimensional vector space \(W\).
To verify \eqref{eq:closure-trace}, choose a homogeneous basis
\(b_1,\ldots,b_N\) of \(M_1\), insert
\(\coqtr(1)=\sum_i b_i\otimes b_i^*\), and then apply \(\qtr\) at the
corresponding upper endpoints.
The surviving summands are precisely the diagonal matrix coefficients of
\(G_M^{\otimes m}A\), and their sum is its trace.

% P3.13
The maps \(J_m\) and \(G_M^{\otimes m}\) commute, because both are diagonal
in every homogeneous tensor-product basis.
Let \(\overline\beta\) be the oriented link obtained by joining the upper
endpoint of every strand of \(\beta\) to the corresponding lower endpoint
outside the braid.
Using \eqref{eq:braid-conjugacy}, \eqref{eq:closure-trace}, commutativity of
the two diagonal maps, and the cyclic identity
\(\Tr_W(AB)=\Tr_W(BA)\), we obtain

\begin{align}
 I_{v,t}^{M_t}(\overline\beta)
 &=\Tr\!\left(G_M^{\otimes m}J_m^{-1}
               \rho_1(\beta)J_m\right)\notag\\
 &=\Tr\!\left(J_mG_M^{\otimes m}J_m^{-1}
               \rho_1(\beta)\right)\notag\\
 &=\Tr\!\left(G_M^{\otimes m}\rho_1(\beta)\right)
 =I_{v,1}^{M_1}(\overline\beta).
 \label{eq:braid-closure-cancellation}
\end{align}
The first and last traces are computed on the same underlying finite
dimensional vector space; the transported and one-parameter tensor powers
differ only in their module actions.
Alexander's theorem says that every oriented link is the closure of an
oriented braid \cite{alexander1923}, so
\eqref{eq:braid-closure-cancellation} independently verifies the closed-link
part of Theorem~\ref{thm:link-equality}.
As shown in Figure~\ref{fig:braid-closure}, the two comparison maps lie inside
the closure, and the same cancellation is expressed in trace notation.

\begin{figure}[t]
\centering
\includegraphics[width=.96\textwidth]{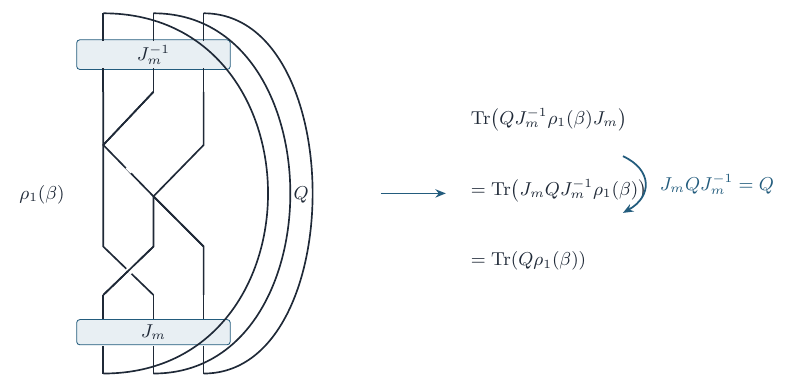}
\caption{Cancellation of the boundary comparison maps in a braid closure.}
\label{fig:braid-closure}
\end{figure}

\subsection{The vector representations of types
\texorpdfstring{$A,B,C,D$}{A, B, C, D}}

% P3.14
We now identify the classical standard modules covered by the theorem.
For type $A_n$, let $\varepsilon_1,\ldots,\varepsilon_{n+1}$ be the standard
orthonormal basis of an $(n+1)$-dimensional Euclidean space; for types
$B_n,C_n,D_n$, let $\varepsilon_1,\ldots,\varepsilon_n$ be the standard
orthonormal basis of an $n$-dimensional Euclidean space.
Write $(\ ,\ )_E$ for this Euclidean inner product.
For a chosen type $X\in\{A_n,B_n,C_n,D_n\}$, the symmetric Cartan form on
the real span of the roots used in \eqref{eq:forms} is normalized by
\begin{equation}\label{eq:ABCD-form-normalization}
 \alpha\mathbin{\cdot}\beta=c_X(\alpha,\beta)_E,
 \qquad c_A=c_C=c_D=1,\qquad c_B=2.
\end{equation}
The factor $c_B=2$ makes the integers
$d_i=\alpha_i\mathbin{\cdot}\alpha_i/2$ relatively prime in type $B_n$,
as required by \eqref{eq:omega}; multiplying the form by this common factor
does not change the fundamental-weight ratios in
\eqref{eq:first-fundamental-weight}.
To specify the two-parameter algebra, fix for each type \(X\) an integral
Ringel matrix \(\Omega^X\) whose symmetrization is this Cartan form.
For example, after ordering the simple roots by their indices, one may take
\begin{equation}\label{eq:ABCD-Ringel-choice}
 \Omega^X_{ii}=d_i,\qquad
 \Omega^X_{ij}=
 \begin{cases}
  \alpha_i\mathbin{\cdot}\alpha_j,&i<j,\\
  0,&i>j.
 \end{cases}
\end{equation}
Indeed, \(\Omega^X_{ij}+\Omega^X_{ji}
=\alpha_i\mathbin{\cdot}\alpha_j\), so \eqref{eq:ABCD-Ringel-choice}
satisfies \eqref{eq:omega} and recovers the displayed symmetric form.
The results below hold for this choice, and more generally for any fixed
Ringel matrix satisfying \eqref{eq:omega} with the same symmetrization.
For type $A_n$, put
\begin{equation}\label{eq:A-weights}
 \overline\varepsilon_i
 =\varepsilon_i-\frac1{n+1}\sum_{k=1}^{n+1}\varepsilon_k
 \qquad(1\leq i\leq n+1).
\end{equation}
For each row indexed by $X$, let $V^X$ denote the vector representation of
Cartan type $X$.
The simple roots and the weights of these vector representations are
\begin{equation}\label{eq:ABCD-roots-weights}
\begin{array}{c|c|c}
X&\text{simple roots}&\text{weights of }V^X\\ \hline
A_n&\alpha_i=\varepsilon_i-\varepsilon_{i+1}
    &(\overline\varepsilon_1,\ldots,\overline\varepsilon_{n+1})\\
B_n&\alpha_i=\varepsilon_i-\varepsilon_{i+1}\ (i<n),\quad
    \alpha_n=\varepsilon_n
    &(\varepsilon_1,\ldots,\varepsilon_n,0,
      -\varepsilon_n,\ldots,-\varepsilon_1)\\
C_n&\alpha_i=\varepsilon_i-\varepsilon_{i+1}\ (i<n),\quad
    \alpha_n=2\varepsilon_n
    &(\varepsilon_1,\ldots,\varepsilon_n,
      -\varepsilon_n,\ldots,-\varepsilon_1)\\
D_n&\alpha_i=\varepsilon_i-\varepsilon_{i+1}\ (i<n),\quad
    \alpha_n=\varepsilon_{n-1}+\varepsilon_n
    &(\varepsilon_1,\ldots,\varepsilon_n,
      -\varepsilon_n,\ldots,-\varepsilon_1)
\end{array}
\end{equation}
In the $D_n$ row, the formula $\alpha_i=\varepsilon_i-
\varepsilon_{i+1}$ is used for $1\leq i\leq n-1$, and the displayed
$\alpha_n$ replaces the usual last difference root.

% P3.15
The smallest admissible lattices containing these vector weights are
\begin{equation}\label{eq:ABCD-lattices}
\begin{array}{c|c|c}
X&Q_X&L_X^{\mathrm{vec}}\\ \hline
A_n&\{a\in\Z^{n+1}:\sum_i a_i=0\}
   &Q_{A_n}+\Z\overline\varepsilon_1\\
B_n&\Z^n&\Z^n\\
C_n&\{a\in\Z^n:\sum_i a_i\equiv0\pmod2\}&\Z^n\\
D_n&\{a\in\Z^n:\sum_i a_i\equiv0\pmod2\}&\Z^n
\end{array}
\end{equation}
The congruence $\sum_i a_i\equiv0\pmod2$ means that the sum of the integer
coordinates is even.
The superscript $\mathrm{vec}$ abbreviates vector representation.
The first lattice in each row is the root lattice, and the last is the
lattice used for the module transport.
Each last lattice contains its root lattice with finite index, so it is
admissible in the sense of \eqref{eq:lattice}.

% P3.16
For each of the four types, let \(\varpi_1\) be the first fundamental weight,
characterized by

\begin{equation}\label{eq:first-fundamental-weight}
 \frac{2\varpi_1\mathbin{\cdot}\alpha_i}
      {\alpha_i\mathbin{\cdot}\alpha_i}=\delta_{1i}
 \qquad(i\in I).
\end{equation}
Thus \(\varpi_1=\overline\varepsilon_1\) in type \(A_n\) and
\(\varpi_1=\varepsilon_1\) in types \(B_n,C_n,D_n\).
A weight \(\Lambda\in L\) is called dominant when
\(2\Lambda\mathbin{\cdot}\alpha_i/
(\alpha_i\mathbin{\cdot}\alpha_i)\in\N\) for every simple root
\(\alpha_i\); hence \(\varpi_1\) is dominant by
\eqref{eq:first-fundamental-weight}.
Let \(V_1^X\) be the usual one-parameter simple highest-weight module of
highest weight \(\varpi_1\), and let \(V_t^X\) be the usual two-parameter
simple highest-weight module of the same highest weight.
These are the vector representations whose weights are listed in
\eqref{eq:ABCD-roots-weights}.
The generic highest-weight construction gives, up to module isomorphism, a
unique integrable simple module of each dominant highest weight
\cite[Section~5]{fan-li2015}.

The transported module \(\Phi_t(V_1^X)\) is simple and has the unchanged
highest-weight vector and highest weight \(\varpi_1\) by
Theorem~\ref{thm:module-transport}.
The preceding uniqueness therefore gives a \(U_{v,t}\)-module isomorphism

\begin{equation}\label{eq:transported-vector-module}
 \psi_X:V_t^X\xrightarrow{\ \cong\ }\Phi_t(V_1^X),
 \qquad X\in\{A_n,B_n,C_n,D_n\}.
\end{equation}
Fix one such isomorphism \(\psi_X\) for each type \(X\).
Changing a color module by an isomorphic module conjugates every boundary
operator by tensor products of \(\psi_X\) and its dual; for a closed link
these conjugating maps cancel.
Consequently, the invariant colored by the usual module \(V_t^X\) equals
the invariant colored by \(\Phi_t(V_1^X)\).

\begin{corollary}[Classical vector representations]
\label{cor:ABCD-vector-equality}
For every oriented link $\mathcal L$, one has
\begin{equation}\label{eq:ABCD-vector-equality}
 \boxed{I_{v,t}^{X,V_t^X}(\mathcal L)
 =I_{v,1}^{X,V_1^X}(\mathcal L)}
\end{equation}
for $X=A_n$ with $n\geq1$, $X=B_n,C_n$ with $n\geq2$, and $X=D_n$ with
$n\geq4$.
The notation records both the Cartan type $X$ and the module used to color
the link.
\end{corollary}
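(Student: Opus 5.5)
The corollary follows by chaining three equalities. The first replaces the usual two-parameter module $V_t^X$ by the transported module $\Phi_t(V_1^X)$. The second is the main transport theorem. The third identifies the one-parameter side.

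First, I check that each pair $(X,V_1^X)$ satisfies the hypotheses of Theorem~\ref{thm:link-equality}. The Ringel matrix \eqref{eq:ABCD-Ringel-choice} satisfies \eqref{eq:omega}. The Cartan matrix is of finite type. In type $B_n$ the normalization $c_B=2$ gives $\gcd d_i=1$, and the same holds in the other types. The weights of $V_1^X$ listed in \eqref{eq:ABCD-roots-weights} lie in the lattice $L_X^{\mathrm{vec}}$ of \eqref{eq:ABCD-lattices}, which is admissible. The module $V_1^X$ is finite-dimensional, integrable, type-$1$ and simple of highest weight $\varpi_1$, with $\varpi_1$ dominant by \eqref{eq:first-fundamental-weight}. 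The rank restrictions $n\ge2$ for $B,C$ and $n\ge4$ for $D$ only avoid coincidences with smaller types, so no extra argument is needed there. Theorem~\ref{thm:link-equality} then gives $I_{v,t}^{\Phi_t(V_1^X)}(\mathcal L)=I_{v,1}^{V_1^X}(\mathcal L)$ for every oriented link.

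Second, I replace $\Phi_t(V_1^X)$ by $V_t^X$ using the isomorphism $\psi_X$ of \eqref{eq:transported-vector-module}. I will make the cancellation explicit, since the text only asserts it. Put $\psi_+=\psi_X$ and $\psi_-=(\psi_X^{-1})^*$, the inverse transpose, which is a module isomorphism $V_t^{X*}\to\Phi_t(V_1^X)^*$. For a sign word, take the tensor product of these. I then check the six generators, in the same way as the local comparison \eqref{eq:local-scalar-transport} with $\sigma=\id$.
\begin{itemize}
\item For the crossings: $\widehat R$ is natural with respect to module isomorphisms, because it is built from the action of $\Theta_t$ and the weight-diagonal $\widetilde f_t$, and $\psi_X$ preserves weights. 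The curl scalar $a_M$ depends only on $\Lambda=\varpi_1$.
\item For $\ev$ and $\coqtr$: these are natural by the usual transpose computation.
\item For $\qtr$ and $\coev$: the extra factor $v_{\pm\lambda}^2$ is weight-diagonal, so naturality again follows because $\psi_X$ is weight-preserving.
\end{itemize}
The point I would verify carefully is that $\psi_X$ maps weight spaces to weight spaces. This holds since type-$1$ toral eigenvalues determine the weight, because $v,t$ are independent and the form is nondegenerate. Theorem~\ref{thm:scalar-transport} with all defects equal to $1$ then yields equality of closed invariants. Alternatively, the braid-closure trace argument \eqref{eq:braid-closure-cancellation} applies with $J_m$ replaced by $\psi_X^{\otimes m}$, which commutes with $G_M^{\otimes m}$.

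Third, on the $t=1$ side the module $V_1^X$ is literally the coloring, so nothing remains to check. Combining the three steps gives \eqref{eq:ABCD-vector-equality}.

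The main obstacle is the existence of $\psi_X$, meaning the uniqueness of the integrable simple highest-weight $U_{v,t}$-module of highest weight $\varpi_1$ that is quoted from \cite{fan-li2015}. I would rely on that citation. As a fallback, I would argue that both modules are quotients of the Verma module of weight $\varpi_1$ by its unique maximal submodule. This uses the triangular decomposition: a simple highest-weight module is generated by a highest-weight vector, and its annihilator in $U^-$ is determined. That fallback argument is the only step not already contained in the earlier results.
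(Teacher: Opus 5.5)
Your proposal is correct and is essentially the paper's proof: you check that the vector modules satisfy the hypotheses of Theorem~\ref{thm:link-equality}, apply that theorem to \(\Phi_t(V_1^X)\), and transfer the equality to \(V_t^X\) through the isomorphism \(\psi_X\) of \eqref{eq:transported-vector-module}, which rests on the same uniqueness citation the paper uses. Your explicit check that the six generator operators are natural under \(\psi_X\) and \((\psi_X^{-1})^*\), using that \(\psi_X\) preserves weights, spells out the one-sentence claim made just before the corollary, namely that the conjugating maps cancel for a closed link.
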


% P3.17
\begin{proof}
The weights in \eqref{eq:ABCD-roots-weights} belong to the corresponding
lattices in the last column of \eqref{eq:ABCD-lattices}.
The Ringel and symmetric forms extend to these lattices over $\Q$, and one
chooses $D$ as in \eqref{eq:denominators}.
The vector modules are finite-dimensional integrable type-$1$ simple
highest-weight modules.
Theorem~\ref{thm:link-equality} gives the equality for
\(\Phi_t(V_1^X)\), and the isomorphism \eqref{eq:transported-vector-module}
transfers it to the usual module \(V_t^X\).
This proves \eqref{eq:ABCD-vector-equality} in all four families.
\end{proof}

% P3.18
The transport theorem also gives all four oriented crossing matrices without
recomputing the quasi-$R$-matrix.
Choose a weight basis $v_1,\ldots,v_N$ of $V_1^X$, where
$N=\dim_{\K_D}V_1^X$, let $v_1^*,\ldots,v_N^*$ be its ordinary dual basis,
and put $\gamma_i=\wt(v_i)$.
The symbol $\dim_{\K_D}V_1^X$ denotes the dimension of $V_1^X$ over
$\K_D$, namely the number of vectors in any basis of this space.
Set $(V_1^X)^+=V_1^X$, $(V_1^X)^-=(V_1^X)^*$, and define
\begin{equation}\label{eq:signed-bases-weights}
 e_{i,1}^+=v_i,
 \qquad e_{i,1}^-=v_i^*,
 \qquad w_i^+=\gamma_i,
 \qquad w_i^-=-\gamma_i.
\end{equation}
The algebraic dual of \(\psi_X\) is the map
\(\psi_X^*:\Phi_t(V_1^X)^*\to(V_t^X)^*\) defined by
\(\psi_X^*(\varphi)=\varphi\circ\psi_X\).
Define module isomorphisms
\begin{equation}\label{eq:signed-vector-identifications}
 \begin{aligned}
 \Psi_X^+&=\psi_X:(V_t^X)^+\longrightarrow\Phi_t((V_1^X)^+),\\
 \Psi_X^-&=D_{V_1^X}^{-1}\circ(\psi_X^*)^{-1}:
 (V_t^X)^-\longrightarrow\Phi_t((V_1^X)^-).
 \end{aligned}
\end{equation}
Thus \(\Psi_X^s\) identifies the usual two-parameter module of sign
\(s\in\{+,-\}\) with the transport of the corresponding one-parameter
module.
On the usual two-parameter modules use the bases
\begin{equation}\label{eq:transported-signed-bases}
 e_{i,t}^+=(\Psi_X^+)^{-1}(v_i)=\psi_X^{-1}(v_i),
 \qquad
 e_{i,t}^-=(\Psi_X^-)^{-1}(v_i^*)
 =\psi_X^*D_{V_1^X}(v_i^*).
\end{equation}
The last expression includes the factor
\(t^{-\langle\gamma_i,\gamma_i\rangle}\) from
\eqref{eq:dual-identification} before the functional is pulled back by
\(\psi_X\).
Thus a $+$ strand carries $V_t^X$, a $-$ strand carries $(V_t^X)^*$, and
both selected bases lie in the stated usual modules.

% P3.19
For signs $s,r\in\{+,-\}$, define the one-parameter matrix coefficients by
\begin{align}
 \widehat R_1^{s,r}
 (e_{i,1}^s\otimes e_{j,1}^r)
 &=\sum_{a,b=1}^N c_{ij}^{ab;s,r}(v)
   e_{a,1}^r\otimes e_{b,1}^s,
 \label{eq:oriented-R1-coefficients}\\
 (\widehat R_1^{s,r})^{-1}
 (e_{i,1}^r\otimes e_{j,1}^s)
 &=\sum_{a,b=1}^N d_{ij}^{ab;s,r}(v)
   e_{a,1}^s\otimes e_{b,1}^r.
 \label{eq:oriented-R1-inverse-coefficients}
\end{align}
The coefficients $c_{ij}^{ab;s,r}(v)$ and $d_{ij}^{ab;s,r}(v)$ are the
unique elements of $\Q(v^{1/D})$ in these two basis expansions.
Let \(\widehat R_{t,\Phi}^{s,r}\) denote the checked $R$-matrix from
\(\Phi_t((V_1^X)^s)\otimes\Phi_t((V_1^X)^r)\) to
\(\Phi_t((V_1^X)^r)\otimes\Phi_t((V_1^X)^s)\).
Each map \(\Psi_X^s\) is a weight-preserving \(U_{v,t}\)-module
isomorphism.
Tensor products of these maps therefore intertwine the actions of every tensor
term of the quasi-\(R\)-matrix on the corresponding modules; they also
intertwine the toral diagonal operators because those operators depend only on
the two weights.
The flip is natural, in the sense that
\[
 P(\Psi_X^s\otimes\Psi_X^r)
 =(\Psi_X^r\otimes\Psi_X^s)P.
\]
Applying these three facts to the definition
\eqref{eq:checked-R-definition} shows that the actual Fan--Ma--Xing checked
$R$-matrix on the usual modules satisfies
\begin{equation}\label{eq:usual-signed-R-identification}
 \widehat R_t^{s,r}
 =\bigl((\Psi_X^r)^{-1}\otimes(\Psi_X^s)^{-1}\bigr)
   \widehat R_{t,\Phi}^{s,r}
   (\Psi_X^s\otimes\Psi_X^r).
\end{equation}
The full two-parameter formulas are
\begin{align}
 \widehat R_t^{s,r}
 (e_{i,t}^s\otimes e_{j,t}^r)
 &=\sum_{a,b=1}^N c_{ij}^{ab;s,r}(v)
 t^{\langle w_j^r,w_i^s\rangle-
    \langle w_b^s,w_a^r\rangle}
 e_{a,t}^r\otimes e_{b,t}^s,
 \label{eq:oriented-Rt-coefficients}\\
 (\widehat R_t^{s,r})^{-1}
 (e_{i,t}^r\otimes e_{j,t}^s)
 &=\sum_{a,b=1}^N d_{ij}^{ab;s,r}(v)
 t^{\langle w_j^s,w_i^r\rangle-
    \langle w_b^r,w_a^s\rangle}
 e_{a,t}^s\otimes e_{b,t}^r.
 \label{eq:oriented-Rt-inverse-coefficients}
\end{align}
Equation \eqref{eq:oriented-Rt-coefficients} has source
$(V_t^X)^s\otimes(V_t^X)^r$ and target
$(V_t^X)^r\otimes(V_t^X)^s$.
Equation \eqref{eq:oriented-Rt-inverse-coefficients} has the reverse source
and target.
Taking $(s,r)=(+,+),\allowbreak(+,-),\allowbreak(-,+),\allowbreak(-,-)$
gives, respectively, the requested
formulas on $V\otimes V$, $V\otimes V^*$, $V^*\otimes V$, and
$V^*\otimes V^*$.

% P3.20
To derive \eqref{eq:oriented-Rt-coefficients}, the input tensorator supplies
$t^{\langle w_j^r,w_i^s\rangle}$ and the inverse output tensorator supplies
$t^{-\langle w_b^s,w_a^r\rangle}$.
Their product is the displayed exponent.
The inverse formula follows in exactly the same way from
\eqref{eq:inverse-R-transport}; its input order is $(r,s)$ and its output
order is $(s,r)$, which explains the different placement of the four
weights.

\subsection{The rank-one vector module}

% P3.21
Consider $U_{v,t}(\mathfrak{sl}_2)$, where $\mathfrak{sl}_2$ is the
rank-one simple Lie algebra of traceless $2\times2$ matrices.
Let $\alpha$ be its simple root, normalize
$\langle\alpha,\alpha\rangle=1$, and put $\omega=\alpha/2$.
Then
$\langle\omega,\alpha\rangle=\langle\alpha,\omega\rangle=1/2$.
Let $V=\Span\{x_+,x_-\}$ with
$\wt(x_+)=\omega$ and $\wt(x_-)=-\omega$.
Here \(\Span\{x_+,x_-\}\) denotes the vector space of all linear
combinations of \(x_+\) and \(x_-\).
Write \(K=K_\alpha\) and \(K'=K_\alpha'\) for the two toral generators.
Their actions, both before and after transport, are
\begin{equation}\label{eq:sl2-toral-action}
 Kx_+=vx_+,\qquad Kx_-=v^{-1}x_-,\qquad
 K'x_+=v^{-1}x_+,\qquad K'x_-=vx_-.
\end{equation}
There is no \(t\)-factor in \eqref{eq:sl2-toral-action} because the rank-one
Ringel form is symmetric.
At $t=1$, choose
\begin{equation}\label{eq:sl2-one-parameter-action}
 Ex_-=x_+,
 \qquad Ex_+=0,
 \qquad Fx_+=x_-,
 \qquad Fx_-=0.
\end{equation}
The transport formulas give
\begin{equation}\label{eq:sl2-transported-action}
 E_tx_-=t^{-1/2}x_+,
 \qquad F_tx_+=t^{1/2}x_-,
 \qquad E_tx_+=F_tx_-=0.
\end{equation}

% P3.22
Because $E_t^2=F_t^2=0$ on $V$, the quasi-$R$ action contains only degrees
$0$ and $\alpha$:
\begin{equation}\label{eq:sl2-quasi-R}
 \Theta_t=1+(v^{-1}-v)F_t\otimes E_t.
\end{equation}
On the only tensor on which the second term is nonzero, the two transport
factors cancel:
\begin{equation}\label{eq:sl2-t-cancellation}
 (F_t\otimes E_t)(x_+\otimes x_-)
 =(t^{1/2}x_-)\otimes(t^{-1/2}x_+)
 =x_-\otimes x_+.
\end{equation}
The Ringel form is symmetric in rank one, so the toral factor is also
independent of $t$.

% P3.23
In the ordered basis
\begin{equation}\label{eq:sl2-basis}
 \mathcal B=(x_+\otimes x_+,x_+\otimes x_-,
 x_-\otimes x_+,x_-\otimes x_-),
\end{equation}
one obtains
\begin{equation}\label{eq:sl2-R-matrix}
 [\widehat R_t]_{\mathcal B}
 =v^{-1/2}
 \begin{pmatrix}
 1&0&0&0\\
 0&0&v&0\\
 0&v&1-v^2&0\\
 0&0&0&1
 \end{pmatrix}
 =[\widehat R_1]_{\mathcal B}.
\end{equation}
The inverse of the middle $2\times2$ block is computed directly, giving
\begin{equation}\label{eq:sl2-R-inverse-matrix}
 [\widehat R_t^{-1}]_{\mathcal B}
 =v^{1/2}
 \begin{pmatrix}
 1&0&0&0\\
 0&1-v^{-2}&v^{-1}&0\\
 0&v^{-1}&0&0\\
 0&0&0&1
 \end{pmatrix}
 =[\widehat R_1^{-1}]_{\mathcal B}.
\end{equation}
Multiplying the matrices in \eqref{eq:sl2-R-matrix} and
\eqref{eq:sl2-R-inverse-matrix} in either order gives the $4\times4$
identity matrix.
As shown in Figure~\ref{fig:rank-one-R}, the matrix is resolved into the
nonzero images of the four ordered basis tensors.

\begin{figure}[t]
\centering
\includegraphics[width=.96\textwidth]{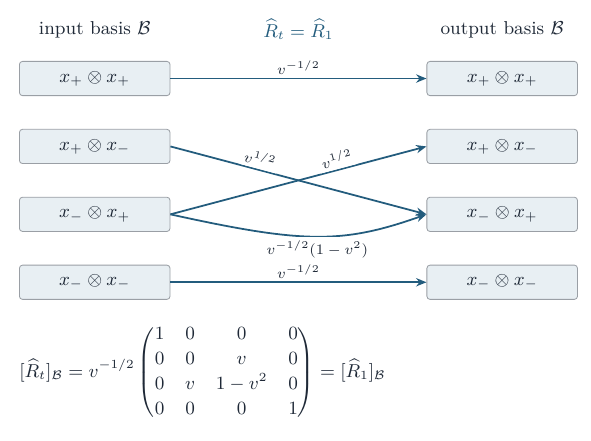}
\caption{The nonzero coefficients of the rank-one checked-$R$ operator.}
\label{fig:rank-one-R}
\end{figure}

\subsection{Sean Clark's theorem as a specialization}
\label{sec:clark}

% P3.24
Sean Clark's quantum covering group uses a quantum variable $q$, a covering
parameter $\pi$ with $\pi^2=1$, and an auxiliary element $\tau$ satisfying
$\tau^2=\pi$ and $\tau^4=1$ \cite[Section~2]{clark2017}.
The root lattice is the integer span of the simple roots, the weight lattice
is the chosen free abelian group of weights containing the root lattice, and
a simple coroot is an element paired integrally with weights and indexed by
the same set as the simple roots.
An enhancer is an integer-valued function on the root and weight lattices
whose congruence conditions record the signs required by parity
\cite[Definition~4.1]{clark2017}.
From an enhancer, Sean Clark constructs a twistor $\mathfrak X$, namely a
semilinear automorphism that changes the quantum variable by
\begin{equation}\label{eq:clark-variable-change}
 \mathfrak X(q)=\tau^{-1}q.
\end{equation}
Here semilinear means that
\(\mathfrak X(f(q)x)=f(\tau^{-1}q)\mathfrak X(x)\) for every coefficient
\(f(q)\) and vector \(x\); the twistor is a parameter-changing operator and
is not a twist generator of \(\mathsf{OTa}\)
\cite[Theorem~4.3]{clark2017}.
On tensor products, the twistor includes an explicit power of $\tau$
depending on the weights and parity data; the resulting weight-diagonal
operator plays the role of the boundary comparison map $B_{\mathbf X}$ in
\eqref{eq:boundary-comparison} \cite[Proposition~4.18]{clark2017}.

% P3.25
Sean Clark's Proposition~4.18 gives the tensor-product comparison, and his
Proposition~4.19 compares the four duality maps
\cite[Propositions~4.18--4.19]{clark2017}.
His Proposition~4.21 compares a crossing, and Proposition~4.22 extends the
crossing and duality comparisons to maps inserted in larger tensor products
\cite[Propositions~4.21--4.22]{clark2017}.
All these comparisons hold up to integral powers of $\tau$
\cite[Propositions~4.21--4.22]{clark2017}.
These powers are the local scalar defects $\alpha(E)$ in
\eqref{eq:local-scalar-transport}, and the writhe normalization in his
Theorem~4.24 supplies the remaining closure factor
\cite[Theorem~4.24]{clark2017}.
The writhe of an oriented diagram is the number of positive crossings minus
the number of negative crossings.

% P3.25a
Fix $n\geq1$, and let $X_{\mathrm{wt}}$ be the weight lattice for the
type-$B_n$ root
datum shared by $\mathfrak{so}(2n+1)$ and the covering form of
$\mathfrak{osp}(1|2n)$.
A root datum consists of a weight lattice, a coweight lattice, chosen simple
roots in the weight lattice, chosen simple coroots in the coweight lattice,
and an integer-valued pairing between the two lattices; pairing the simple
coroots with the simple roots gives the Cartan matrix.
A weight $\lambda\in X_{\mathrm{wt}}$ is called dominant if
$\langle h_i,\lambda\rangle\in\N$ for every simple coroot $h_i$; here
$\langle h_i,\lambda\rangle$ is the integer obtained by pairing the coroot
$h_i$ with the weight $\lambda$.
The simple coroots are characterized by
$\langle h_i,\alpha_j\rangle=a_{ij}$, where $a_{ij}$ are the Cartan
matrix entries of the type-$B_n$ root datum.
Let $\C$ denote the field of complex numbers, and fix
$\iota\in\C$ with $\iota^2=-1$.
Let $q$ be an indeterminate, that is, a formal variable rather than a fixed
number.
Put $R=\C(q)$, the field of rational functions in $q$, and define the field
automorphism
\begin{equation}\label{eq:clark-scalar-map}
 \sigma_\iota:R\longrightarrow R,
 \qquad \sigma_\iota(f(q))=f(\iota^{-1}q).
\end{equation}
For an oriented knot $K$ colored by $\lambda$, write
$J_{\mathfrak{so},K}^{\lambda}(q)$ for Sean Clark's normalized covering
knot invariant at $\tau=1$, and write
$J_{\mathfrak{osp},K}^{\lambda}(q)$ for the corresponding invariant at a
fixed value $\tau=\iota$ with $\iota^2=-1$.
The letter $J$ in these two invariant symbols follows Clark's notation and
is distinct from the boundary comparison map $B_{\mathbf X}$.
The semilinearity in \eqref{eq:clark-variable-change} means precisely that
the twistor becomes $R$-linear after scalar extension along $\sigma_\iota$.
Consequently, Sean Clark's tensor-product, duality, crossing, and writhe
comparisons satisfy the hypotheses of Theorem~\ref{thm:scalar-transport} with
$R_0=R_1=R$, $\sigma=\sigma_\iota$, the tensor-product twistor as
$B_{\mathbf X}$, and the indicated powers of $\iota$ as $\alpha(E)$ for
elementary slices \(E\).

\begin{theorem}[Sean Clark specialization]
\label{thm:clark-specialization}
Let $K$, $\lambda$, $q$, and $\iota$ be as above.
There is an integer $\eta(K,\lambda)$ such that
\begin{equation}\label{eq:clark-comparison}
 J_{\mathfrak{osp},K}^{\lambda}(q)
 =\iota^{\eta(K,\lambda)}
  J_{\mathfrak{so},K}^{\lambda}(\iota^{-1}q).
\end{equation}
This is the specialization of Theorem~\ref{thm:scalar-transport} obtained
from the two covering values $\tau=\iota$ and $\tau=1$.
After the scalar normalization specified by
$\iota^{\eta(K,\lambda)}$ and the displayed change of variable, the two
quantum-covering knot invariants contain the same information.
\end{theorem}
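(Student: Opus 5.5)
The plan is to apply Theorem~\ref{thm:scalar-transport} with $R_0=R_1=R=\C(q)$ and $\sigma=\sigma_\iota$. Take $\mathcal F_0$ to be Clark's tangle functor at $\tau=1$ (the $\mathfrak{so}(2n+1)$ side) and $\mathcal F_1$ the functor at $\tau=\iota$ (the $\mathfrak{osp}(1|2n)$ side), both colored by the simple module of dominant highest weight $\lambda$. The boundary comparisons $B_{\mathbf X}$ are built from the twistor exactly as in \eqref{eq:one-letter-boundary-comparison}--\eqref{eq:word-boundary-comparison}. On a one-letter word, $B_+$ is the twistor on the module and $B_-$ is the twistor composed with the duality identification of \cite[Proposition~4.19]{clark2017}. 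On concatenations, $B$ is the weight-diagonal tensor-product correction of \cite[Proposition~4.18]{clark2017}.

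The first step is to check that these maps land in the extended category. By \eqref{eq:clark-variable-change}, the twistor is $\sigma_\iota$-semilinear, so after extension along $\sigma_\iota$ it becomes an $R$-linear invertible map $\mathcal F_1(\mathbf X)\to\sigma_{\iota*}\mathcal F_0(\mathbf X)$, with $B_\varnothing=\id_R$. Coherence of the iterated tensor correction then follows from the associativity statement accompanying Proposition~4.18, playing the role of \eqref{eq:tensorator-coherence}.

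The second step verifies \eqref{eq:local-scalar-transport} on the six generators. Proposition~4.21 gives the positive crossing, and its inverse gives the negative crossing. Proposition~4.19 gives the four cups and caps. In each case the defect $\alpha(E)$ is an integral power of $\iota$, with exponent depending on the weight and parity data of $\lambda$. Proposition~4.22 promotes these identities to elementary slices padded by identity strands. This is where the weight-diagonal nature of $B$ matters: the padding contributes only factors that cancel between $B_{\mathbf X}$ and $B_{\mathbf Y}^{-1}$.

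The main obstacle is the standing hypothesis that the defects respect the relations of $\mathsf{OTa}$, so that $\alpha(T)$ is well defined. Rather than checking every relation, I would restrict to closed knots. For these it suffices to show that the product of local defects depends only on invariant data: the writhe, the number of cups and caps of each orientation, and $\lambda$. The plan is to present $K$ as a braid closure, as in Subsection~3.7. Crossing defects then contribute $\iota^{c\cdot w(K)}$ for some integer $c$ depending on $\lambda$, where $w(K)$ is the writhe. Cup and cap defects of opposite orientations pair up along the closure strands, so their net contribution depends only on the number $m$ of strands, and Markov stabilization shifts $m$ and $w(K)$ together. Clark's writhe normalization in \cite[Theorem~4.24]{clark2017} is designed to absorb exactly this dependence. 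After normalization, the leftover exponent is an integer $\eta(K,\lambda)$, which is all the statement requires; no claim of $t$-independence is made.

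Finally, \eqref{eq:closed-scalar-transport} gives $J_{\mathfrak{osp},K}^{\lambda}(q)=\iota^{\eta(K,\lambda)}\sigma_\iota\bigl(J_{\mathfrak{so},K}^{\lambda}(q)\bigr)$. By \eqref{eq:clark-scalar-map}, $\sigma_\iota\bigl(J_{\mathfrak{so},K}^{\lambda}(q)\bigr)=J_{\mathfrak{so},K}^{\lambda}(\iota^{-1}q)$, which is \eqref{eq:clark-comparison}. The claim that the two invariants contain the same information follows because $\sigma_\iota$ is a field automorphism and $\iota^{\eta}$ is a unit.
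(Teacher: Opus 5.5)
Your proposal follows the paper's proof. Both apply Theorem~\ref{thm:scalar-transport} with $R_0=R_1=R$ and $\sigma=\sigma_\iota$. Both use Clark's tensor-product twistor as $B_{\mathbf X}$, and his powers of $\iota$ together with the writhe normalization of \cite[Theorem~4.24]{clark2017} as the defects. Both then unwind $\sigma_\iota$ in \eqref{eq:closed-scalar-transport}.

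Your braid-closure discussion of decomposition-independence goes beyond the paper, and it is not needed for the existence of $\eta(K,\lambda)$. Applying the two-slice cancellation \eqref{eq:two-slice-cancellation} to any single fixed diagram of $K$ already gives $I_1(K)=\bigl(\prod_k\alpha(E_k)\bigr)\sigma_\iota(I_0(K))$, and that product is an integral power of $\iota$.
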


% P3.26
\begin{proof}
Apply Theorem~\ref{thm:scalar-transport} with
$R_0=R_1=R$, $\sigma=\sigma_\iota$, the tensor-product twistor as
$B_{\mathbf X}$, and Sean Clark's local powers of $\iota$ as $\alpha(E)$.
Because all internal comparison maps cancel, the local powers of $\iota$
and the writhe-normalization factor multiply to a single scalar
$\alpha(K)=\iota^{\eta(K,\lambda)}$ for some
$\eta(K,\lambda)\in\Z$.
The closed-link formula \eqref{eq:closed-scalar-transport} therefore gives
\[
 J_{\mathfrak{osp},K}^{\lambda}(q)
 =\iota^{\eta(K,\lambda)}
  \sigma_\iota\!\left(J_{\mathfrak{so},K}^{\lambda}(q)\right).
\]
Using the definition of $\sigma_\iota$ in
\eqref{eq:clark-scalar-map} yields \eqref{eq:clark-comparison}.
This is Sean Clark's Theorem~4.24 in the notation used here.
\end{proof}

% P3.27
The term specialization in Theorem~\ref{thm:clark-specialization} refers to
the general scalar-defect transport theorem and its data
$(\sigma_\iota,B_{\mathbf X},\alpha)$.
For $U_{v,t}$ the defect is identically one by \eqref{eq:zero-defect}; in
Sean Clark's setting it is an integral power of $\tau$.
The two results are therefore the strict and scalar-defect cases of the same
formal cancellation principle.
As shown in Figure~\ref{fig:two-applications}, the figure summarizes these two
applications of the general theorem and displays the resulting invariant
identities.

\begin{figure}[t]
\centering
\includegraphics[width=.98\textwidth]{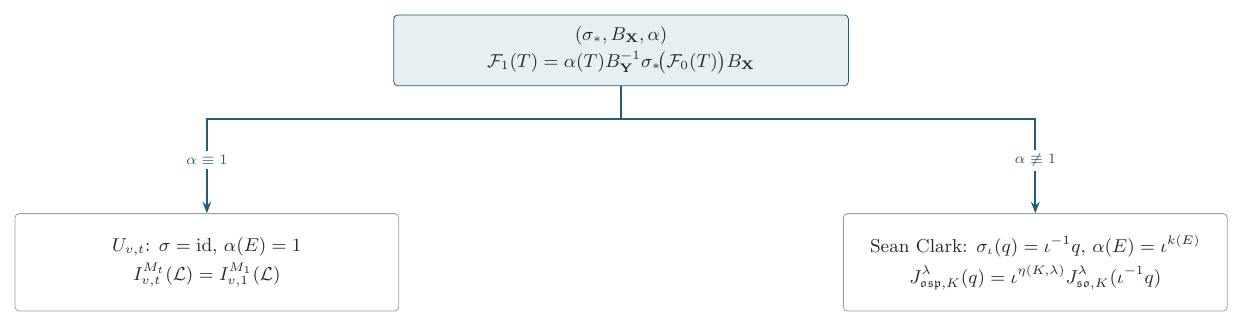}
\caption{The strict and scalar-defect applications of parameter transport.}
\label{fig:two-applications}
\end{figure}

%==============================================================================
\section{Conclusions and Scope}
\label{sec:conclusions}
%==============================================================================

% P4.01
The principal conclusion is the function identity
$I_{v,t}^{M_t}(\mathcal L)=I_{v,1}^{M_1}(\mathcal L)$ for every oriented
link in the stated
finite-type module domain.
It follows from explicit module transport, checked-$R$ conjugacy, strict
transport of the four duality maps, independence of the curl normalization,
and cancellation of the iterated tensorator in the closure trace
\eqref{eq:closure-trace}.
Equation \eqref{eq:fineness-equivalence} shows that equality of values for
two links is equivalent on the two sides, so their distinguishing power is
exactly the same.

% P4.02
The argument is uniform in the finite Cartan type.
It covers every finite-dimensional integrable type-$1$ simple
highest-weight module on an admissible lattice for which the
Fan--Ma--Xing tangle functor is defined.
In particular, Corollary~\ref{cor:ABCD-vector-equality} treats the vector
representations of $A_n,B_n,C_n,D_n$ simultaneously, while
\eqref{eq:oriented-Rt-coefficients} and
\eqref{eq:oriented-Rt-inverse-coefficients} give all four combinations with
the dual space.

% P4.03
The abstract result is Theorem~\ref{thm:scalar-transport}.
The $U_{v,t}$ equality is its strict specialization with $\alpha=1$.
Sean Clark's comparison is its quantum-covering specialization with a
controlled scalar defect, followed by $\tau=\iota$ and $\tau=1$.
This formulation gives the precise sense in which Sean Clark's theorem is
a specialization of the general result proved here.

% P4.04
The scope is finite type, generic parameters, and finite-dimensional
integrable type-$1$ modules equipped with the stated tangle data.
Affine Cartan data, infinite-dimensional modules, roots of unity, and
categories lacking these duality and normalization maps require separate
arguments and are not included in the assertions of this paper.

\bibliographystyle{amsplain}

\end{document}